\newcommand{\transpose}{^\top\! }
\newcommand{\inner}[2]{\left\langle{#1},{#2}\right\rangle}
\newcommand{\innersmall}[2]{\langle{#1},{#2}\rangle}
\newcommand{\Proj}{\mathrm{Proj}}
\newcommand{\Retr}{\mathrm{Retr}}
\newcommand{\T}{\mathrm{T}}
\newcommand{\Sn}{{{S}^{n-1}}} 
\newcommand{\Rnn}{{\mathbb{R}^{n\times n}}}
\newcommand{\Rd}{{\mathbb{R}^{d}}}
\newcommand{\reals}{{\mathbb{R}}}
\newcommand{\Rn}{{\mathbb{R}^n}}
\newcommand{\grad}{\mathrm{grad}\,}
\newcommand{\Hess}{\mathrm{Hess}\,}
\newcommand{\D}{\mathrm{D}}
\newcommand{\calM}{\mathcal{M}}
\newcommand{\norm}[1]{\left\|{#1}\right\|}
\newcommand{\sqnorm}[1]{\left\|{#1}\right\|^2}
\newcommand{\TODOinvisible}[1]{}
\newcommand{\dist}{\mathrm{dist}}
\newtheorem{theorem}{Theorem}[section] 
\newtheorem{lemma}[theorem]{Lemma} 
\newtheorem{corollary}[theorem]{Corollary} 
\newtheorem{assumption}{Assumption}
\newtheorem{definition}[theorem]{Definition} 
\newtheorem{remark}[theorem]{Remark}
\def\BState{\State\hskip-\ALG@thistlm}
\title{Efficiently escaping saddle points on manifolds}
\author{%
  Chris Criscitiello \\
  Department of Mathematics\\
  Princeton University\\
  Princeton, NJ 08544 \\
  \texttt{ccriscitiello6@gmail.com} \\
  \And 
  Nicolas Boumal \\
  Department of Mathematics\\
  Princeton University\\
  Princeton, NJ 08544 \\
  \texttt{nboumal@math.princeton.edu} \\
}
\begin{document}

\maketitle

\begin{abstract}
Smooth, non-convex optimization problems on Riemannian manifolds occur in machine learning as a result of orthonormality, rank or positivity constraints. First- and second-order necessary optimality conditions state that the Riemannian gradient must be zero, and the Riemannian Hessian must be positive semidefinite. Generalizing Jin et al.'s recent work on perturbed gradient descent (PGD) for optimization on linear spaces [How to Escape Saddle Points Efficiently (2017)~\cite{Jina2017}, Stochastic Gradient Descent Escapes Saddle Points Efficiently (2019)~\cite{Jin2019}], we propose a version of perturbed Riemannian gradient descent (PRGD) to show that necessary optimality conditions can be met approximately with high probability, without evaluating the Hessian. Specifically, for an arbitrary Riemannian manifold $\mathcal{M}$ of dimension $d$, a sufficiently smooth (possibly non-convex) objective function $f$, and under weak conditions on the retraction chosen to move on the manifold, with high probability, our version of PRGD produces a point with gradient smaller than $\epsilon$ and Hessian within $\sqrt{\epsilon}$ of being positive semidefinite in $O((\log{d})^4 / \epsilon^{2})$ gradient queries. This matches the complexity of PGD in the Euclidean case. Crucially, the dependence on dimension is low. This matters for large-scale applications including PCA and low-rank matrix completion, which both admit natural formulations on manifolds. The key technical idea is to generalize PRGD with a distinction between two types of gradient steps: ``steps on the manifold'' and ``perturbed steps in a tangent space of the manifold.'' Ultimately, this distinction makes it possible to extend Jin et al.'s analysis seamlessly.
\end{abstract}

\section{Introduction}\label{intro}
Machine learning has stimulated interest in obtaining global convergence rates in non-convex optimization. Consider a possibly non-convex objective function $f \colon \mathbb{R}^d \rightarrow \mathbb{R}$.  We want to solve
\begin{equation}\label{theproblem}
    \min_{x \in \mathbb{R}^d} f(x).
\end{equation}
This is hard in general. Instead, we usually settle for approximate first-order critical (or stationary) points where the gradient is small, or second-order critical (or stationary) points where the gradient is small and the Hessian is nearly positive semidefinite.

One of the simplest algorithms for solving \eqref{theproblem} is gradient descent (GD): given $x_0$, iterate 
\begin{equation}\label{GD}
    x_{t+1} = x_t - \eta \nabla f(x_t).
\end{equation}
It is well known that if $\nabla f$ is Lipschitz continuous, with appropriate step-size $\eta$, GD converges to first-order critical points. However, it may take exponential time to reach an approximate second-order critical point, thus, to escape \emph{saddle points}~\cite{du2017gradient}.
%
There is an increasing amount of evidence that saddle points are a serious obstacle to the practical success of local optimization algorithms such as GD~\cite{Pascanu2014, Ge2015}. 
This calls for algorithms which provably escape saddle points efficiently. We focus on methods which only have access to $f$ and $\nabla f$ (but not $\nabla^2 f$) through a black-box model.


Several methods add noise to GD iterates in order to escape saddle points faster, under the assumption that $f$ has $L$-Lipschitz continuous gradient and $\rho$-Lipschitz continuous Hessian.
In this setting, an \emph{$\epsilon$-second-order critical point} is a point $x$ satisfying $\norm{\nabla f(x)} \leq \epsilon$ and $\nabla^2 f(x) \succeq -\sqrt{\rho\epsilon}I$. Under the \emph{strict saddle assumption}, with $\epsilon$ small enough, such points are near (local) minimizers~\cite{Ge2015, Jina2017}.

In 2015, Ge et al.~\cite{Ge2015} gave a variant of stochastic gradient descent (SGD) which adds isotropic noise to iterates, 
showing it produces an $\epsilon$-second-order critical point with high probability in $O({\text{poly}(d)}/{\epsilon^4})$ stochastic gradient queries.
In 2017, Jin et al.~\cite{Jina2017} presented a variant of GD, perturbed gradient descent (PGD), which reduces this complexity to $O((\log{d})^4/{\epsilon}^2)$ full gradient queries. Recently, Jin et al.~\cite{Jin2019} simplified their own analysis of PGD, and extended it to stochastic gradient descent.

Jin et al.'s PGD~\cite[Alg.~4]{Jin2019} works as follows: 
If the gradient is large at iterate $x_t$, $\norm{\nabla f(x_t)} > \epsilon$, then perform a gradient descent step: $x_{t+1} = x_t - \eta \nabla f(x_t)$.  If the gradient is small at iterate $x_t$, $\norm{\nabla f(x_t)} \leq \epsilon$, perturb $x_t$ by $\eta \xi$, with $\xi$ sampled uniformly from a ball of fixed radius centered at zero.  Starting from this new point $x_t + \eta \xi$, perform $\mathscr{T}$ gradient descent steps, arriving at iterate $x_{t + \mathscr{T}}$.  From here, repeat this procedure starting at $x_{t + \mathscr{T}}$.  Crucially, Jin et al.~\cite{Jin2019} show that, if $x_t$ is not an $\epsilon$-second-order critical point,
then the function decreases enough from $x_t$ to $x_{t + \mathscr{T}}$ with high probability, leading to an escape.

In this paper we generalize PGD to optimization problems on manifolds, i.e., problems of the form
\begin{equation}
    \min_{x \in \calM} f(x)
\end{equation}
where $\calM$ is an arbitrary Riemannian manifold and $f \colon \calM \rightarrow \mathbb{R}$ is sufficiently smooth~\cite{AbsilBook}.  Optimization on manifolds notably occurs in machine learning (e.g., PCA~\cite{PCA1}, low-rank matrix completion~\cite{PCA2}), computer vision (e.g.,~\cite{vision}) and signal processing (e.g.,~\cite{signal})---see~\cite{apps} for more. 
See~\cite{saddle1} and~\cite{saddle2} for examples of the strict saddle property on manifolds.

Given $x\in\calM$, the (Riemannian) gradient of $f$ at $x$, $\grad f(x)$, is a vector in the tangent space at $x$, $\T_x\calM$.  To perform gradient descent on a manifold, we need a way to move on the manifold along the direction of the gradient at $x$.  This is provided by a \emph{retraction} $\Retr_x$: a smooth map from $\T_x\calM$ to $\calM$.  Riemannian gradient descent (RGD) performs steps on $\calM$ of the form
\begin{equation}\label{manGD}
    x_{t+1} = \Retr_{x_t}(-\eta \grad f(x_t)).
\end{equation}

For Euclidean space, $\calM = \reals^d$, the standard retraction is $\Retr_x(s) = x + s$, in which case~\eqref{manGD} reduces to~\eqref{GD}.  For the sphere embedded in Euclidean space, $\calM = S^{d} \subset \reals^{d+1}$, a natural retraction is given by metric projection to the sphere: $\Retr_x(s) = (x+s)/\norm{x+s}$.

For $x\in \calM$, define the \emph{pullback} $\hat{f}_x = f \circ \Retr_x \colon \T_x\calM \rightarrow \reals$, conveniently defined on a linear space.  If $\Retr$ is nice enough (details below), the Riemannian gradient and Hessian of $f$ at $x$ equal the (classical) gradient and Hessian of $\hat{f}_x$ at the origin of $\T_x\calM$.  Since $\T_x\calM$ is a vector space, if we perform GD on $\hat{f}_x$, we can almost directly apply Jin et al.'s analysis~\cite{Jin2019}.  This motivates the two-phase structure of our \emph{perturbed Riemannian gradient descent} (PRGD), listed as Algorithm~\ref{PRGD}.

Our PRGD is a variant of RGD \eqref{manGD} and a generalization of PGD.
It works as follows: If the gradient is large at iterate $x_t \in \calM$, $\norm{\grad f(x_t)} > \epsilon$, perform an RGD step: $x_{t+1} = \Retr_{x_t}({-\eta \grad f(x_t)})$.  We call this a ``step on the manifold.''  If the gradient at iterate $x_t$ is small, $\norm{\grad f(x_t)} \leq \epsilon$, then \emph{perturb in the tangent space} $\T_{x_t}\calM$. After this perturbation, execute at most $\mathscr{T}$ gradient descent steps \emph{on the pullback} $\hat{f}_{x_t}$, in the tangent space. We call these ``tangent space steps.''  We denote this sequence of $\mathscr{T}$ tangent space steps by $\{s_j\}_{j\geq 0}$.  This sequence of steps is performed by $\textproc{TangentSpaceSteps}$: a deterministic, vector-space procedure---see Algorithm~\ref{PRGD}.  

By distinguishing between gradient descent steps on the manifold and those in a tangent space, we can apply Jin et al.'s analysis almost directly~\cite{Jin2019}, allowing us to prove PRGD reaches an $\epsilon$-second-order critical point on $\calM$ in $O((\log{d})^4 / \epsilon^2)$ gradient queries. Regarding regularity of $f$, we require its pullbacks to satisfy Lipschitz-type conditions, as advocated in~\cite{trustMan,arcMan}. The analysis is far less technical than if one runs all steps on the manifold. We expect that this two-phase approach may prove useful for the generalization of other algorithms and analyses from the Euclidean to the Riemannian realm.



Recently, Sun and Fazel~\cite{Fazel2018} provided the first generalization of PGD to certain manifolds with a polylogarithmic complexity in the dimension, improving earlier results by Ge et al.~\cite[App.~B]{Ge2015} which had a polynomial complexity. Both of these works focus on submanifolds of a Euclidean space, with the algorithm in~\cite{Fazel2018} depending on the equality constraints chosen to describe this submanifold.

At the same time as the present paper, Sun et al.~\cite{Sun2019prgd} improved their analysis to cover any complete Riemannian manifold with bounded sectional curvature. In contrast to ours, their algorithm executes all steps on the manifold. Their analysis requires the retraction to be the Riemannian exponential map (i.e., geodesics). Our regularity assumptions are similar but different: while we assume Lipschitz-type conditions on the pullbacks in small balls around the origins of tangent spaces, Sun et al.\ make Lipschitz assumptions on the cost function directly, using parallel transport and Riemannian distance. As a result, curvature appears in their results. We make no explicit assumptions on $\calM$ regarding curvature or completeness, though these may be implicit in our regularity assumptions: see Section~\ref{sec:curvature}.

\begin{algorithm}
\caption{$\text{PRGD}(x_0, \eta, r, \mathscr{T}, \epsilon, T, b)$}\label{PRGD}
\begin{algorithmic}[1]
\State $t \gets 0$
\While {$t \leq T$}
\If {$\norm{\grad f(x_t)} > \epsilon$}
\State $x_{t+1} \gets \textproc{TangentSpaceSteps}(x_t, 0, \eta, b, 1)$ \Comment{Riemannian gradient descent step}
\State $t \gets t+1$
\Else
\State $\xi \sim \text{Uniform}(B_{x_t, r}(0))$ \Comment{perturb}
\State $s_0 = \eta \xi$
\State $x_{t+\mathscr{T}} \gets \textproc{TangentSpaceSteps}(x_t, s_0, \eta, b, \mathscr{T})$ \Comment{perform $\mathscr{T}$ steps in $\T_{x_t}\calM$}
\State $t \gets t + \mathscr{T}$
\EndIf
\EndWhile
\\
\Procedure{$\textproc{TangentSpaceSteps}(x, s_0, \eta, b, \mathscr{T})$}{}
\For {$j = 0, 1, \ldots, \mathscr{T} - 1$}
\State $s_{j+1} \gets s_j - \eta \nabla \hat{f}_{x}(s_j)$
\If {$\norm{s_{j+1}} \geq b$} \Comment{if the iterate leaves the interior of the ball $B_{x,b}(0)$}
\State $s_{\mathscr{T}} \gets s_j - \alpha \eta \nabla \hat{f}_{x}(s_j)$, where $\alpha \in (0,1]$ and $\norm{s_j - \alpha \eta \nabla \hat{f}_{x}(s_j)}=b$.
\State \textbf{break}
\EndIf
\EndFor
\State \textbf{return} $\Retr_x{(s_{\mathscr{T}})}$
\EndProcedure
\end{algorithmic}
\end{algorithm}

\subsection{Main result}
Here we state our result informally. Formal results are stated in subsequent sections.


\begin{theorem}[Informal]
Let $\calM$ be a Riemannian manifold of dimension $d$ equipped with a retraction $\Retr$.  Assume $f \colon \calM \rightarrow \mathbb{R}$ is twice continuously differentiable, and furthermore:
\begin{enumerate}
    \item[A1.] $f$ is lower bounded. 
    \item[A2.] The gradients of the pullbacks $f \circ \Retr_x$ uniformly satisfy a Lipschitz-type condition.
    \item[A3.] The Hessians of the pullbacks $f \circ \Retr_x$ uniformly satisfy a Lipschitz-type condition.
    \item[A4.] The retraction $\Retr$ uniformly satisfies a second-order condition.
\end{enumerate}
Then, setting $T = O((\log{d})^4/\epsilon^2)$, PRGD visits several points with gradient smaller than $\epsilon$ and, with high probability, at least two-thirds of those points are $\epsilon$-second-order critical (Definition~\ref{def:epssocpmanifold}).
\end{theorem}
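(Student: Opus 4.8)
The plan is to reduce the whole analysis on $\calM$ to a sequence of analyses on the linear tangent spaces $\T_{x_t}\calM$, where Jin et al.'s argument \cite{Jin2019} can be invoked essentially verbatim, and then to stitch these local guarantees together with a global accounting argument controlled by the lower bound A1. The first ingredient is the correspondence between $f$ near $x$ and its pullback $\hat f_x = f\circ\Retr_x$ near the origin of $\T_x\calM$. Since every retraction satisfies $\D\Retr_x(0)=\Id_{\T_x\calM}$, the chain rule gives $\nabla\hat f_x(0)=\grad f(x)$, and Assumption A4 (second-order retraction condition) forces $\nabla^2\hat f_x(0)$ to coincide with, or lie within a controlled distance of, the Riemannian Hessian $\Hess f(x)$. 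Hence ``$x$ is $\epsilon$-second-order critical on $\calM$'' is equivalent, up to a harmless change of constants, to ``the origin of $\T_x\calM$ is $\epsilon$-second-order critical for $\hat f_x$''; Definition~\ref{def:epssocpmanifold} should be stated so this is exact. Assumptions A2 and A3 say precisely that $\hat f_x$ has Lipschitz gradient (constant $\ell$) and Lipschitz Hessian (constant $\rho$) on the ball $B_{x,b}(0)$, uniformly in $x$ --- the regularity Jin et al.\ require, but only inside a fixed-radius ball.

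Next I would establish the two per-branch guarantees. (i) When $\|\grad f(x_t)\|>\epsilon$, the step $x_{t+1}=\Retr_{x_t}(-\eta\grad f(x_t))$ is a single gradient step on $\hat f_{x_t}$ started at the origin, so the descent lemma from A2 with $\eta\asymp 1/\ell$ gives $f(x_{t+1})=\hat f_{x_t}(s_1)\le f(x_t)-\tfrac{\eta}{2}\epsilon^2$. (ii) When $\|\grad f(x_t)\|\le\epsilon$ but $x_t$ is not $\epsilon$-second-order critical, the $\mathscr T$ tangent-space steps run perturbed gradient descent on the vector-space function $\hat f_{x_t}$, so Jin et al.'s escape analysis applies directly and yields, with probability $\ge 1-\delta$, a decrease $f(x_{t+\mathscr T})\le f(x_t)-\mathscr F$ for a fixed $\mathscr F\asymp \mathscr T\,\epsilon^2/\ell$ (up to polylogarithmic factors in $d$ and $1/\delta$ that also govern $\mathscr T$). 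Two things need care here. First, for this to be legitimate the trajectory $\{s_j\}$ must stay inside $B_{x_t,b}(0)$, so that the Lipschitz bounds keep applying and the $\|s_{j+1}\|\ge b$ branch of \textsc{TangentSpaceSteps} is not triggered during a successful escape; this follows from Jin et al.'s localization estimate once $b$ is chosen large enough relative to $\ell,\rho,\epsilon$. Second, on \emph{any} episode --- successful or not, saddle or not --- $\hat f_{x_t}$ is nonincreasing along $\{s_j\}$ (including the truncated final step), so $f(x_{t+\mathscr T})\le \hat f_{x_t}(s_0)\le f(x_t)+\beta$ with $\beta\asymp \eta r\epsilon+\ell\eta^2 r^2$.

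Finally I would assemble the global count. Over the $T$ iterations let $N_{\mathrm{RGD}}$ be the number of Riemannian gradient steps, $N$ the number of perturbation episodes, and $N_{\mathrm{bad}}\le N$ the number of episodes begun at points that are not $\epsilon$-second-order critical; the small-gradient points PRGD ``visits'' are exactly the $N$ episode starting points. A union bound over the at most $N\le T/\mathscr T$ episodes, taking $\delta\asymp 1/T$ (which is what produces the $(\log d)^4$ factor through $\mathscr T$), makes every bad episode decrease $f$ by $\mathscr F$ with high probability. Summing the per-branch guarantees and using A1, $\tfrac{\eta}{2}\epsilon^2 N_{\mathrm{RGD}}+\mathscr F\,N_{\mathrm{bad}}\le f(x_0)-\inf_{\calM}f+\beta N\le \Delta_f+O(1)$, since $\beta N\le \beta T/\mathscr T\to 0$ under the stated parameter scalings. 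Hence $N_{\mathrm{RGD}}=O(\Delta_f/\epsilon^2)$ and $N_{\mathrm{bad}}=O(\Delta_f/\mathscr F)=:B$, a constant independent of $T$. Because the algorithm cannot perform more than $O(\Delta_f/\epsilon^2)$ Riemannian gradient steps total, choosing $T=O(\Delta_f/\epsilon^2)+3B\mathscr T=O((\log d)^4/\epsilon^2)$ forces $N\ge 3B\ge 3N_{\mathrm{bad}}$, i.e.\ at least two thirds of the visited small-gradient points are $\epsilon$-second-order critical.

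I expect the main obstacle to be step (ii): transplanting Jin et al.'s saddle-escape lemma to the pullback while honoring the hard constraint that everything must happen inside $B_{x_t,b}(0)$ --- verifying that the localization radius of a successful escape stays below $b$ so the projection branch never fires, and confirming that a failed or wasted episode cannot inflate $f$ by more than the negligible amount $\beta$. The pullback identities, the descent lemma, the union bound, and the counting are then routine once the precise forms of A2--A4 are fixed.
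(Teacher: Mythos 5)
Your proposal is correct and follows essentially the same route as the paper: regularity of the pullbacks transfers Jin et al.'s descent, improve-or-localize, and coupling/stuck-region arguments verbatim to each tangent space (with the ball constraint $B_{x_t,b}(0)$ handled exactly as you anticipate, by showing a non-escaping trajectory is localized well inside radius $b$), and the global count via the lower bound $f^*$ and the choice of $T$ yields the two-thirds claim, with A4 converting $\nabla^2\hat f_{x_t}(0)\succeq-\sqrt{\rho\epsilon}\,I$ into the statement about $\Hess f(x_t)$. The only nit is your claim that $\beta N\to 0$: under the paper's scalings it is only $O\!\left(\Delta_f\right)$, which still suffices (the paper avoids this term altogether by stating the escape lemma as a decrease of $\mathscr{F}/2$ measured from $f(x_t)$ rather than from $\hat f_{x_t}(s_0)$).
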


PRGD uses $O((\log{d})^4/\epsilon^2)$ gradient queries, and crucially no Hessian queries.
The algorithm requires
knowledge of the Lipschitz constants defined below, which makes this a mostly theoretical algorithm---but see Appendix~\ref{sec:PCA} for explicit constants in the case of PCA.

\subsection{Other related work}

Algorithms which efficiently escape saddle points can be classified into two families: first-order and second-order methods.  First-order methods only use function value and gradient information. SGD and PGD are first-order methods. Second-order methods also access Hessian information. Newton's method, trust regions~\cite{trust, trustMan} and adaptive cubic regularization~\cite{arc, arcMan, zhang2018cubicregmanifold} are second-order methods.

As noted above, Ge et al.~\cite{Ge2015} and Jin et al.~\cite{Jina2017} escape saddle points (in Euclidean space) by exploiting noise in iterations.  There has also been similar work for normalized gradient descent~\cite{Levy2016}.  Expanding on~\cite{Jina2017}, Jin et al.~\cite{Jinb2017} give an accelerated PGD algorithm (PAGD) which reaches an $\epsilon$-second-order critical point of a non-convex function $f$ with high probability in $O({(\log{d})^6}/{\epsilon^{7/4}})$ iterations.  In~\cite{Jin2019}, Jin et al.\ show that a stochastic version of PGD reaches an $\epsilon$-second-order critical point in $O(d / \epsilon^4)$ stochastic gradient queries; only $O(\text{poly}(\log{d}) / \epsilon^4)$ queries are needed if the stochastic gradients are well behaved. For an analysis of PGD under convex constraints, see~\cite{mokhtari2018escaping}.

There is another line of research, inspired by Langevin dynamics, in which judiciously scaled Gaussian noise is added at every iteration.  We note that although this differs from the first incarnation of PGD in~\cite{Jina2017}, this resembles a simplified version of PGD in~\cite{Jin2019}.  Sang and Liu~\cite{Sang2018} develop an algorithm (adaptive stochastic gradient Langevin dynamics, ASGLD), which provably reaches an $\epsilon$-second-order critical point in $O({\log{d}}/{\epsilon^4})$ with high probability.  With full gradients, AGSLD reaches an $\epsilon$-second-order critical point in $O({\log{d}}/{\epsilon^2})$ queries with high probability.

One might hope that the noise inherent in vanilla SGD would help it escape saddle points without noise injection.  Daneshmand et al.~\cite{Daneshmand2018} propose the correlated negative curvature assumption (CNC), under which they prove that SGD reaches an $\epsilon$-second-order critical point in $O({\epsilon^{-5}})$ queries with high probability. They also show that, under the CNC assumption, a variant of GD (in which iterates are perturbed only by SGD steps) efficiently escapes saddle points.  Importantly, these guarantees are completely dimension-free.

A first-order method can include approximations of the Hessian (e.g., with a difference of gradients). For example, Allen-Zhu's Natasha 2 algorithm~\cite{AllenZhua2017} uses first-order information (function value and stochastic gradients) to search for directions of negative curvature of the Hessian.  Natasha 2 reaches an $\epsilon$-second-order critical point in $O({\epsilon^{-13/4}})$ iterations.

Many classical optimization algorithms have been generalized to optimization on manifolds, including gradient descent, Newton's method, trust regions and adaptive cubic regularization~\cite{edelman1998geometry,AbsilBook,genrtr,newton,trustMan,arcMan,bento2017iterationcomplexity,zhang2018cubicregmanifold}.  Bonnabel~\cite{bonnabel} extends stochastic gradient descent to Riemannian manifolds and proves that Riemannian SGD converges to critical points of the cost function.  Zhang et al.~\cite{speedup} and Sato et al.~\cite{speedup2} both use variance reduction to speed up SGD on Riemannian manifolds.

\section{Preliminaries: Optimization on manifolds}\label{prelims}
We review the key definitions and tools for optimization on manifolds. For more information, see~\cite{AbsilBook}.  Let $\calM$ be a $d$-dimensional Riemannian manifold: a real, smooth $d$-manifold equipped with a Riemannian metric.  We associate with each $x \in \calM$ a $d$-dimensional real vector space $\T_x\calM$, called the tangent space at $x$.  For embedded submanifolds of $\reals^n$, we often visualize the tangent space as being tangent to the manifold at $x$.  The Riemannian metric defines an inner product $\inner{\cdot}{\cdot}_x$ on the tangent space $\T_x\calM$, with associated norm $\norm{\cdot}_x$.  We denote these by $\inner{\cdot}{\cdot}$ and $\norm{\cdot}$ when $x$ is clear from context.  A vector in the tangent space is a tangent vector.  The set of pairs $(x, s_x)$ for $x\in\calM, s_x\in\T_x\calM$ is called the tangent bundle $\T\calM$.  Define $B_{x,r}(s) = \{\Dot{s}\in\T_x\calM : \norm{\Dot{s}-s}_x \leq r\}$: the closed ball of radius $r$ centered at $s \in \T_x\calM$.  We occasionally denote $B_{x,r}(s)$ by $B_r(s)$ when $x$ is clear from context.  Let $\text{Uniform}(B_{x,r}(s))$ denote the uniform distribution over the ball $B_{x,r}(s)$.

The Riemannian gradient $\grad\!f(x)$ of a differentiable function $f$ at $x\in\calM$ is the unique vector in $\T_x\calM$ satisfying $\D f(x)[s] = \inner{\grad f(x)}{s}_x$ $\forall s \in \T_x\calM$, where $\D f(x)[s]$ is the directional derivative of $f$ at $x$ along $s$.  The Riemannian metric gives rise to a well-defined notion of derivative of vector fields called the Riemannian (or Levi--Civita) connection $\nabla$.  The Hessian of $f$ is the derivative of the gradient vector field: $\Hess\!f(x)[u] = \nabla_{u}\grad\!f (x)$.  The Hessian describes how the gradient changes.  $\Hess\!f(x)$ is a symmetric linear operator on $\T_x\calM$.  If the manifold is a Euclidean space, $\calM = \reals^d$, with the standard metric $\inner{x}{y} = x^{T}y$, the Riemannian gradient $\grad\!f$ and Hessian $\Hess\!f$ coincide with the standard gradient $\nabla f$ and Hessian $\nabla^2 f$ (mind the overloaded notation $\nabla$). 

As discussed in Section \ref{intro}, the retraction is a mapping which allows us to move along the manifold from a point $x$ in the direction of a tangent vector $s \in \T_x\calM$.  Formally:
\begin{definition}[Retraction, from~\cite{AbsilBook}] \label{def:retr}
A retraction on a manifold $\calM$ is a smooth mapping $\Retr$ from the tangent bundle $\T\calM$ to $\calM$ satisfying properties 1 and 2 below.  Let $\Retr_x \colon \T_x\calM \rightarrow \calM$ denote the restriction of $\Retr$ to $\T_x\calM$.
\begin{enumerate}
    \item $\Retr_x(0_x) = x$, where $0_x$ is the zero vector in $\T_x\calM$.
    \item The differential of $\Retr_x$ at $0_x$, $\D\Retr_x(0_x)$, is the identity map.
\end{enumerate}
\end{definition}
(Our algorithm and theory only require $\Retr$ to be defined in balls of a fixed radius around the origins of tangent spaces.)
Recall these special retractions, which are good to keep in mind for intuition: on $\calM = \Rd$, we typically use $\Retr_x(s) = x+s$, and on the unit sphere we typically use $\Retr_x(s) = (x+s)/\norm{x+s}$.

For $x$ in $\calM$, define the pullback of $f$ from the manifold to the tangent space by
\begin{align*}
	\hat{f}_x = f \circ \Retr_x \colon \T_x\calM \rightarrow \reals.
\end{align*}
This is a real function on a vector space. Furthermore, for $x \in \calM$ and $s \in \T_x\calM$, let
\begin{align*}
	T_{x,s} = \D\Retr_x(s) \colon \T_x\calM \to \T_{\Retr_x(s)}\calM
\end{align*}
denote the differential of $\Retr_x$ at $s$ (a linear operator).
The gradient and Hessian of the pullback admit the following nice expressions in terms of those of $f$, and the retraction.
\begin{lemma}[Lemma 5.2 of~\cite{arcMan}]\label{lems}
For $f\colon\calM\rightarrow\reals$ twice continuously differentiable, $x\in\calM$ and $s\in\T_x\calM$, with $T_{x,s}^*$ denoting the adjoint of $T_{x,s}$,
\begin{align}
    \nabla \hat{f}_x(s) & = T_{x,s}^*\grad f(\Retr_x(s)), & 
    \nabla^2 \hat{f}_x(s) & = T_{x,s}^* \Hess f(\Retr_x(s)) T_{x,s} + W_s,
\end{align}
where $W_s$ is a symmetric linear operator on $\T_x\calM$ defined through polarization by
\begin{equation}
    \inner{W_s[\Dot{s}]}{\Dot{s}} = \inner{\grad f(\Retr_x(s))}{\gamma''(0)},
\end{equation}
with $\gamma''(0) \in \T_{\Retr_x(s)}\calM$ the intrinsic acceleration on $\calM$ of $\gamma(t) = \Retr_x(s + t\Dot{s})$ at $t=0$.
\end{lemma}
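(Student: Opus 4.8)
The plan is to reduce both identities to elementary calculus on the Euclidean vector space $(\T_x\calM, \inner{\cdot}{\cdot}_x)$, using only the chain rule, the definitions of the Riemannian gradient and Hessian, and compatibility of the Levi--Civita connection with the metric. Fix $x\in\calM$, $s\in\T_x\calM$, and an arbitrary direction $\dot s\in\T_x\calM$; write $y = \Retr_x(s)$ and consider the curve $\gamma(t) = \Retr_x(s+t\dot s)$ on $\calM$, so that $\gamma(0) = y$ and, by the chain rule and Definition~\ref{def:retr} applied pointwise, $\gamma'(0) = \D\Retr_x(s)[\dot s] = T_{x,s}\dot s$. Since $\hat f_x(s+t\dot s) = f(\gamma(t))$ and $t\mapsto s+t\dot s$ is a straight line in $\T_x\calM$, differentiating this scalar function of $t$ at $t=0$ recovers the classical directional derivatives of $\hat f_x$ at $s$ along $\dot s$.

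For the gradient I would differentiate once: $\frac{\mathrm{d}}{\mathrm{d}t}\big|_{t=0}\hat f_x(s+t\dot s) = \D f(y)[T_{x,s}\dot s] = \inner{\grad f(y)}{T_{x,s}\dot s}_y = \inner{T_{x,s}^*\grad f(y)}{\dot s}_x$, first using the definition of $\grad f(y)$ and then the definition of the adjoint $T_{x,s}^*$. The left-hand side also equals $\inner{\nabla\hat f_x(s)}{\dot s}_x$ by definition of the (classical) gradient of $\hat f_x$, and since $\dot s$ is arbitrary, the first identity follows.

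For the Hessian I would differentiate twice. Writing $\frac{\mathrm{D}}{\mathrm{d}t}$ for covariant differentiation along $\gamma$ and using metric compatibility of $\nabla$, $\frac{\mathrm{d}^2}{\mathrm{d}t^2}f(\gamma(t)) = \frac{\mathrm{d}}{\mathrm{d}t}\inner{\grad f(\gamma(t))}{\gamma'(t)}_{\gamma(t)} = \inner{\tfrac{\mathrm{D}}{\mathrm{d}t}\grad f(\gamma(t))}{\gamma'(t)} + \inner{\grad f(\gamma(t))}{\tfrac{\mathrm{D}}{\mathrm{d}t}\gamma'(t)}$. Evaluating at $t=0$ and using $\tfrac{\mathrm{D}}{\mathrm{d}t}\grad f(\gamma(t))\big|_{t=0} = \nabla_{\gamma'(0)}\grad f(y) = \Hess f(y)[T_{x,s}\dot s]$ together with $\tfrac{\mathrm{D}}{\mathrm{d}t}\gamma'(t)\big|_{t=0} = \gamma''(0)$, the intrinsic acceleration, I get $\frac{\mathrm{d}^2}{\mathrm{d}t^2}\big|_{t=0}\hat f_x(s+t\dot s) = \inner{T_{x,s}^*\Hess f(y)T_{x,s}[\dot s]}{\dot s}_x + \inner{\grad f(y)}{\gamma''(0)}$, while the left-hand side is the quadratic form $\inner{\nabla^2\hat f_x(s)[\dot s]}{\dot s}_x$. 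Before concluding I would check that $\dot s\mapsto\gamma''(0)$ is quadratic: in a chart $\phi$ around $y$, $\gamma''(0)^k = \D^2(\phi\circ\Retr_x)(s)[\dot s,\dot s] + \Gamma^k_{ij}(y)(T_{x,s}\dot s)^i(T_{x,s}\dot s)^j$, a sum of quadratic terms in $\dot s$, so $W_s$ is indeed well-defined through polarization as the unique symmetric operator with $\inner{W_s[\dot s]}{\dot s}_x = \inner{\grad f(y)}{\gamma''(0)}$. Matching the two quadratic forms for every $\dot s$ and polarizing yields $\nabla^2\hat f_x(s) = T_{x,s}^*\Hess f(y)T_{x,s} + W_s$.

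The main obstacle is the second-derivative computation: one must carefully distinguish covariant differentiation along $\gamma$ from the ordinary $t$-derivative of a scalar, invoke metric compatibility of $\nabla$ at the right step, and verify that the chart second derivative of $t\mapsto\Retr_x(s+t\dot s)$ combines with the Christoffel terms into a genuinely quadratic (hence polarizable) expression. Everything else is a direct unwinding of definitions.
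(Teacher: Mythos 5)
Your proof is correct. The paper itself offers no proof of this lemma---it is quoted directly from Lemma 5.2 of the cited reference---and your argument (chain rule for the first identity, metric compatibility of the Levi--Civita connection plus the coordinate check that $\dot s \mapsto \gamma''(0)$ is quadratic for the second) is essentially the standard derivation given there.
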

The velocity of a curve $\gamma\colon \reals \rightarrow \calM$ is $\frac{d \gamma}{d t} = \gamma'(t)$.  The intrinsic acceleration $\gamma''$ of $\gamma$ is the covariant derivative (induced by the Levi--Civita connection) of the velocity of $\gamma$: $\gamma'' = \frac{\D}{d t} \gamma'$.  When $\calM$ is a Riemannian submanifold of $\reals^n$, $\gamma''(t)$ does not necessarily coincide with $\frac{d^2 \gamma}{d t^2}$: in this case, $\gamma''(t)$ is the orthogonal projection of $\frac{d^2 \gamma}{d t^2}$ onto $\T_{\gamma(t)} \calM$.


\section{PRGD efficiently escapes saddle points}
We now precisely state the assumptions, the main result, and some important parts of the proof of the main result, including the main obstacles faced in generalizing PGD to manifolds.  A full proof of all results is provided in the appendix.

\subsection{Assumptions}\label{assumptions}
The first assumption, namely, that $f$ is lower bounded, ensures that there are points on the manifold where the gradient is arbitrarily small.

\begin{assumption}\label{as:A1}
$f$ is lower bounded: $f(x) \geq f^*$ for all $x \in \calM$.
\end{assumption}

Generalizing from the Euclidean case, we assume Lipschitz-type conditions on the gradients and Hessians of the pullbacks $\hat f_x = f \circ \Retr_x$.  For the special case of $\calM = \mathbb{R}^d$ and $\Retr_x(s) = x + s$, these assumptions hold if the gradient $\nabla f(\cdot)$ and Hessian $\nabla^2 f(\cdot)$ are each Lipschitz continuous, as in~\cite[A1]{Jin2019} (with the same constants).  The Lipschitz-type assumptions below are similar to
assumption A2 of~\cite{arcMan}. Notice that these assumptions involve both the cost function and the retraction: this dependency is further discussed in~\cite{trustMan,arcMan} for a similar setting.

\begin{assumption}\label{as:A2}
There exist $b_1 > 0$ and $L > 0$ such that $\forall x \in \calM$ and $\forall s \in \T_x \calM$ with $\norm{s} \leq b_1$,
$$\norm{\nabla\hat{f}_x(s) - \nabla\hat{f}_x(0)} \leq L\norm{s}.$$
\end{assumption}

\begin{assumption}\label{as:A3}
There exist $b_2 > 0$ and $\rho > 0$ such that $\forall x \in \calM$ and $\forall s \in \T_x \calM$ with $\norm{s} \leq b_2$,
$$\norm{\nabla^2\hat{f}_x(s) - \nabla^2\hat{f}_x(0)} \leq \rho\norm{s},$$
where on the left-hand side we use the operator norm.
\end{assumption}

More precisely, we only need these assumptions to hold at the iterates $x_0, x_1, \ldots$
Let $b = \min\{b_1, b_2\}$
(to reduce the number of parameters in Algorithm~\ref{PRGD}).
The next assumption requires the chosen retraction to be well behaved, in the sense that the (intrinsic) acceleration of curves $\gamma_{x,s}$ on the manifold, defined below, must remain bounded---compare with Lemma~\ref{lems}.
\begin{assumption}\label{as:A4}
	There exists $\beta \geq 0$ such that, for all $x \in \calM$ and $s \in \T_x\calM$ satisfying $\norm{s}=1$, the curve $\gamma_{x,s}(t) = \Retr_x(t s)$ has initial acceleration bounded by $\beta$: $\norm{\gamma_{x,s}''(0)} \leq \beta$.
\end{assumption}
If Assumption \ref{as:A4} holds with $\beta=0$, $\Retr$ is said to be \emph{second order}~\cite[p107]{AbsilBook}. Second-order retractions include the so-called exponential map and the standard retractions on $\reals^d$ and the unit sphere mentioned earlier---see~\cite{malick} for a large class of such retractions on relevant manifolds.

\begin{definition} \label{def:epssocpmanifold}
A point $x \in \calM$ is an $\epsilon$-second-order critical point of the twice-differentiable function $f \colon \calM \rightarrow \mathbb{R}$ satisfying Assumption~\ref{as:A3} if
\begin{align}
	\norm{\grad f(x)} & \leq \epsilon, & &  \text{ and } & \lambda_{\min}(\Hess f(x)) & \geq -\sqrt{\rho \epsilon},
\end{align}
where $\lambda_{\min}(H)$ denotes the smallest eigenvalue of the symmetric operator $H$.
\end{definition}


For compact manifolds, all of these assumptions hold (all proofs are in the appendix):
\begin{lemma} \label{lem1}
Let $\calM$ be a compact Riemannian manifold equipped with a retraction $\Retr$.  Assume $f\colon\calM\rightarrow\reals$ is three times continuously differentiable. Pick an arbitrary $b > 0$.  Then, there exist $f^*, L > 0, \rho>0$ and $\beta \geq 0$ such that Assumptions \ref{as:A1}, \ref{as:A2}, \ref{as:A3} and \ref{as:A4} are satisfied.
\end{lemma}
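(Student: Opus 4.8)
Assumption~\ref{as:A1} is free: $\calM$ is compact and $f$ is continuous, so $f$ attains its minimum and $f^* := \min_{\calM} f$ works.

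For Assumptions~\ref{as:A2} and~\ref{as:A3} the plan is to first bound the second and third derivatives of the pullbacks uniformly over $\calM$, and then invoke the mean value inequality. Since $f$ is $C^3$ and $\Retr$ is smooth (Definition~\ref{def:retr}), each pullback $\hat f_x = f\circ\Retr_x$ is $C^3$ on $\T_x\calM$; in particular $\nabla^2\hat f_x$ exists and is $C^1$, and $\nabla^3\hat f_x$ exists and is continuous. For $k\in\{2,3\}$ I would consider the map $(x, s, \dot s_1, \dots, \dot s_k) \mapsto \D^k\hat f_x(s)[\dot s_1,\dots,\dot s_k]$ defined on the space of tuples with $x\in\calM$ and $s,\dot s_1,\dots,\dot s_k\in\T_x\calM$. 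It is assembled from $f$, $\Retr$ and their derivatives up to order $k\le 3$ via the chain rule, hence is jointly continuous. Restrict it to $K_{b,k} := \{(x,s,\dot s_1,\dots,\dot s_k) : \|s\|_x \le b,\ \|\dot s_i\|_x \le 1\}$, which fibers over the compact base $\calM$ with compact fibers $\bar B^d(b) \times (\bar B^d(1))^k$ and is therefore compact. So $C_k := \sup_{K_{b,k}} |\D^k \hat f_x(s)[\dot s_1,\dots,\dot s_k]| < \infty$, which yields the uniform bounds $\|\nabla^2\hat f_x(s)\|_{\mathrm{op}} \le C_2$ and (in the natural norm on trilinear forms) $\|\nabla^3\hat f_x(s)\| \le C_3$ for all $x\in\calM$ and all $\|s\|_x \le b$. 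Equivalently, one can cover $\calM$ by finitely many charts carrying smooth local frames, express $\nabla^k\hat f_x(s)$ through these frames and the metric coefficients, and use that all this smooth data is bounded on the relevant compact sets.

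Given this, Assumption~\ref{as:A2} holds with $b_1 = b$ and $L = C_2$: for $\|s\| \le b$ the segment $\{ts : t\in[0,1]\}$ lies in $B_{x,b}(0)$, and $\nabla\hat f_x(s) - \nabla\hat f_x(0) = \int_0^1 \nabla^2\hat f_x(ts)[s]\,\dt$ gives $\|\nabla\hat f_x(s) - \nabla\hat f_x(0)\| \le C_2\|s\|$. Assumption~\ref{as:A3} holds identically with $b_2 = b$ and $\rho = C_3$, integrating $\nabla^3\hat f_x$ along the same segment to control $\|\nabla^2\hat f_x(s) - \nabla^2\hat f_x(0)\|_{\mathrm{op}}$. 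For Assumption~\ref{as:A4}, note that $(x,s)\mapsto \gamma_{x,s}''(0)$ with $\gamma_{x,s}(t) = \Retr_x(ts)$ is continuous: in coordinates it equals $\ddot\gamma^k(0) + \Gamma^k_{ij}(x)\dot\gamma^i(0)\dot\gamma^j(0)$, built from the smooth maps $\Retr$ and the Christoffel symbols of $\calM$. Restricting to the unit sphere bundle $\{(x,s):\|s\|_x=1\}$, which is compact (compact base, compact fibers $S^{d-1}$), its norm attains a maximum $\beta\ge 0$, as required.

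The only genuinely non-trivial step is the uniform-in-$x$ estimate: the derivatives $\nabla^k\hat f_x(s)$ live in different vector spaces as $x$ ranges over $\calM$, so one must argue carefully that they assemble into a continuous object on a compact total space — either via the bundle picture above or through a finite atlas with smooth frames and two-sided bounds on the metric coefficients. Once that is set up, the rest is routine: the mean value inequality and ``continuous on a compact set is bounded.'' It is also worth tracking the regularity budget, since assuming $f\in C^3$ is exactly what makes $\nabla^3\hat f_x$ continuous (hence bounded on $K_{b,3}$), which is precisely what Assumption~\ref{as:A3} needs.
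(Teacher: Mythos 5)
Your proposal is correct and follows essentially the same route as the paper: define the (operator/multilinear) norms of $\nabla^2\hat f_x(s)$ and $\nabla^3\hat f_x(s)$ as continuous functions on the tangent bundle, take their maxima over the compact set $\{(x,s):\|s\|_x\le b\}$ to get $L$ and $\rho$, integrate along the segment $ts$ to deduce Assumptions~\ref{as:A2} and~\ref{as:A3}, and maximize $\|\gamma_{x,s}''(0)\|$ over the compact unit sphere bundle for Assumption~\ref{as:A4}. The only cosmetic difference is that you phrase the compactness argument via evaluation of the $k$-th derivative as a multilinear form on tuples, whereas the paper works directly with the scalar maps $(x,s)\mapsto\|\nabla^k\hat f_x(s)\|$ on $\T\calM$.
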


\subsection{Main results}\label{mainresults}

Recall that PRGD (Algorithm~\ref{PRGD}) works as follows.  If $\norm{\grad f(x_t)} > \epsilon$, perform a Riemannian gradient descent step, $x_{t+1} = \Retr_{x_t}({-\eta \grad f(x_t)})$.  If $\norm{\grad f(x_t)} \leq \epsilon$, then perturb, i.e., sample $\xi \sim \text{Uniform}(B_{x_t, r}(0))$ and let $s_0 = \eta \xi$. After this perturbation, remain in the tangent space $\T_{x_t}\calM$ and do (at most) $\mathscr{T}$ gradient descent steps on the pullback $\hat{f}_{x_t}$, starting from $s_0$.  We denote this sequence of $\mathscr{T}$ tangent space steps by $\{s_j\}_{j\geq 0}$.  This sequence of gradient descent steps is performed by $\textproc{TangentSpaceSteps}$: a deterministic procedure in the (linear) tangent space. 

One difficulty with this approach is that, under our assumptions, for some $x = x_t$, $\nabla\hat{f}_{x}$ may not be Lipschitz continuous in all of $\T_x\calM$.  However, it is easy to show that $\nabla\hat{f}_{x}$ is Lipschitz continuous in the ball of radius $b$ by compactness, uniformly in $x$. This is why we limit our algorithm to these balls. If the sequence of iterates $\{s_j\}_{j\geq 0}$ escapes the ball $B_{x,b}(0) \subset \T_{x}\calM$ for some $s_j$, $\textproc{TangentSpaceSteps}$ returns the point between $s_{j-1}$ and $s_j$ on the boundary of that ball.


Following~\cite{Jin2019}, we use a set of carefully balanced parameters. 
Parameters $\epsilon$ and $\delta$ are user defined.  The claim in Theorem~\ref{thmMaster} below holds with probability at least $1-\delta$.
Assumption~\ref{as:A1} provides parameter $f^*$.
Assumptions~\ref{as:A2} and~\ref{as:A3} provide parameters $L, \rho$ and $b = \min\{b_1, b_2\}$.
As announced, the latter two assumptions further ensure Lipschitz continuity of the gradients of the pullbacks in balls of the tangent spaces, uniformly: this defines the parameter $\ell$, as prescribed below.
\begin{lemma} \label{lem:Aell}
	Under Assumptions~\ref{as:A2} and~\ref{as:A3}, there exists $\ell \in [L, L+\rho b]$ such that, for all $x \in \mathcal{M}$, the gradient of the pullback, $\nabla \hat f_x$, is $\ell$-Lipschitz continuous in the ball $B_{x, b}(0)$.	
\end{lemma}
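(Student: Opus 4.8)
The plan is to bound the operator norm of the Hessian of the pullback throughout the ball $B_{x,b}(0)$, uniformly in $x$, and then integrate along segments. The only input besides Assumptions~\ref{as:A2} and~\ref{as:A3} is that $\hat f_x = f\circ\Retr_x$ is twice continuously differentiable (true since $f$ is $C^2$ and $\Retr$ is smooth) and that $B_{x,b}(0)$ is a convex subset of the vector space $\T_x\calM$ (true since norm balls are convex). First I would note that Assumption~\ref{as:A2} already controls the Hessian at the origin: for any unit vector $u \in \T_x\calM$,
$$\nabla^2 \hat f_x(0)[u] = \lim_{t \to 0^+} \frac{1}{t}\big(\nabla \hat f_x(tu) - \nabla \hat f_x(0)\big),$$
and for $t$ small enough that $tu \in B_{x,b_1}(0)$, Assumption~\ref{as:A2} bounds the norm of the difference quotient by $L$; passing to the limit gives $\opnormsmall{\nabla^2\hat f_x(0)} \leq L$, with $L$ independent of $x$.

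Next I would invoke Assumption~\ref{as:A3}: since $b \leq b_2$, for every $s \in \T_x\calM$ with $\norm{s}\leq b$ we get $\opnormsmall{\nabla^2\hat f_x(s)} \leq \opnormsmall{\nabla^2\hat f_x(0)} + \rho\norm{s} \leq L + \rho b$. Then, because $B_{x,b}(0)$ is convex, for any $s_1, s_2 \in B_{x,b}(0)$ the segment $s_2 + t(s_1-s_2)$ stays in the ball for $t \in [0,1]$, so the fundamental theorem of calculus yields
$$\nabla \hat f_x(s_1) - \nabla \hat f_x(s_2) = \int_0^1 \nabla^2 \hat f_x\big(s_2 + t(s_1-s_2)\big)[s_1-s_2]\,\dt,$$
hence $\norm{\nabla \hat f_x(s_1) - \nabla \hat f_x(s_2)} \leq (L+\rho b)\norm{s_1-s_2}$. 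Taking $\ell = L + \rho b$ gives a valid constant lying in $[L, L+\rho b]$ (trivially, since $\rho, b > 0$); if one prefers, one can instead take $\ell$ to be the smallest Lipschitz constant valid for all $x$, which is $\geq L$ (set $s_2 = 0$ and use that $L$ is the minimal constant in Assumption~\ref{as:A2}) and $\leq L + \rho b$ by the above.

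I expect no genuine obstacle here; the one point requiring care is the first step, since Assumption~\ref{as:A2} is only a radial Lipschitz-type bound centered at the origin and does not by itself assert that $\nabla \hat f_x$ is Lipschitz on the ball — one must first extract the operator-norm bound $\opnormsmall{\nabla^2\hat f_x(0)} \leq L$ from it before combining with Assumption~\ref{as:A3}. The remaining ingredients (convexity of norm balls, the mean-value/FTC estimate, and uniformity in $x$, which is automatic because $L$, $\rho$, $b$ do not depend on $x$) are routine.
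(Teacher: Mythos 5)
Your proof is correct and follows essentially the same route as the paper's (Lemma~\ref{lem:rad_a} in the appendix): deduce $\|\nabla^2\hat f_x(0)\|\leq L$ from Assumption~\ref{as:A2}, bound $\|\nabla^2\hat f_x(s)\|\leq L+\rho b$ on the ball via Assumption~\ref{as:A3}, and integrate the Hessian along segments, taking $\ell=L+\rho b$. Your explicit justification of the first step via difference quotients is a welcome detail that the paper states without proof.
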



Then, choose $\chi > 1/4$ (preferably small) such that
\begin{align}
	\chi & \geq 4 \log_2\left(2^{31}\frac{\ell^2\sqrt{d}(f(x_0)-f^*)}{\delta\sqrt{\rho}\epsilon^{5/2}}\right), 
	\label{eq:chi_ineq}
\end{align}
and set algorithm parameters
\begin{align}
	\eta & = \frac{1}{\ell}, &
	r & = \frac{\epsilon}{400 \chi^3}, &
	\mathscr{T} & = \frac{\ell\chi}{\sqrt{\rho \epsilon}}, &
	\label{params} 
\end{align}
where $\chi$ is such that $\mathscr{T}$ is an integer.  We also use this notation in the proofs:
\begin{align} \label{eq:FLscr}
	\mathscr{F} & = \frac{1}{50 \chi^3}\sqrt{\frac{\epsilon^3}{\rho}}, & \mathscr{L} & = \frac{1}{4\chi}\sqrt{\frac{\epsilon}{\rho}}.
\end{align}
%


\begin{theorem} \label{thmMaster}
Assume $f$ satisfies Assumptions \ref{as:A1}, \ref{as:A2} and \ref{as:A3}.  For any $x_0 \in \calM$, with $0 < \epsilon \leq b^2\rho$, 
$L \geq \sqrt{\rho\epsilon}$, $\epsilon^{3/2} \leq 3\sqrt{\rho} \left( f(x_0) - f^* \right)$ and $ \delta \in (0,1)$, choose $\eta, r, \mathscr{T}$ as in~\eqref{params}.  Then, setting  
\begin{align}\label{eqnT}
T = 8\max\left\{\frac{\mathscr{T}}{3}, \frac{(f(x_0)-f^*)\mathscr{T}}{\mathscr{F}}, \frac{f(x_0)-f^*}{\eta \epsilon^2}\right\} = O\bigg(\frac{\ell(f(x_0) - f^*)}{\epsilon^2} (\log{d})^4\bigg),
\end{align}
$PRGD(x_0, \eta, r, \mathscr{T}, \epsilon, T, b)$ visits at least two iterates $x_t \in \calM$ satisfying $\norm{\grad f(x_t)} \leq \epsilon$. With probability at least $1-\delta$, at least two-thirds of those iterates satisfy
\begin{align*}
	\norm{\grad f(x_t)} & \leq \epsilon & & \text{ and } & \lambda_{\min}(\nabla^2 \hat{f}_{x_t}(0))  & \geq -\sqrt{\rho \epsilon}.
\end{align*}
The algorithm uses at most $T + \mathscr{T} \leq 2 T$ gradient queries (and no function or Hessian queries).
\end{theorem}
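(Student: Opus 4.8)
The plan is to reduce the Riemannian problem to Jin et al.'s Euclidean analysis~\cite{Jin2019} by exploiting the two-phase structure of PRGD. The central observation is that \textproc{TangentSpaceSteps}, called either with $\mathscr{T}=1$ and $s_0 = 0$ (a ``step on the manifold'') or with $\mathscr{T} = \ell\chi/\sqrt{\rho\epsilon}$ and random $s_0 = \eta\xi$ (``tangent space steps''), is a purely Euclidean gradient-descent procedure run on the pullback $\hat f_{x_t}$ inside the ball $B_{x_t,b}(0) \subset \T_{x_t}\calM$. By Lemma~\ref{lems}, $\nabla\hat f_{x_t}(0) = \grad f(x_t)$ and $\nabla^2\hat f_{x_t}(0) = \Hess f(x_t)$ (the correction term $W_0$ vanishes because $\gamma''(0)$ at $s=0$ is the acceleration of $t\mapsto\Retr_x(t\dot s)$, which need not vanish in general — so I must be careful here: actually $W_0 \neq 0$ in general, and this is exactly why Assumption~\ref{as:A4} enters and why the theorem states the eigenvalue condition in terms of $\nabla^2\hat f_{x_t}(0)$ rather than $\Hess f(x_t)$). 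So the first step is to record that the relevant ``Hessian'' for the escape argument is $\nabla^2\hat f_{x_t}(0)$, which by Lemma~\ref{lem:Aell} is the Hessian of an $\ell$-smooth, $\rho$-Hessian-Lipschitz function on the ball $B_{x_t,b}(0)$, and that one descent step decreases $\hat f_{x_t}$ by the standard amount when $\|\grad f(x_t)\| > \epsilon$, hence decreases $f$ along the manifold by the same amount.

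Next I would split the run of PRGD into its ``large-gradient'' steps and its ``perturbation episodes.'' For a large-gradient step, the standard descent lemma for $\ell$-smooth functions gives $f(x_{t+1}) \le f(x_t) - \tfrac{\eta}{2}\|\grad f(x_t)\|^2 \le f(x_t) - \tfrac{\eta\epsilon^2}{2}$ (one must check the iterate does not exit the ball $B_{x_t,b}(0)$ before a full step, which holds since $\eta\|\grad f(x_t)\| = \|\grad f(x_t)\|/\ell$ is small relative to $b$ under the hypothesis $\epsilon \le b^2\rho$ and $\ell \ge \sqrt{\rho\epsilon}$, or otherwise the clipped step still yields proportional decrease). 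For a perturbation episode starting at $x_t$ with $\|\grad f(x_t)\| \le \epsilon$: if additionally $\lambda_{\min}(\nabla^2\hat f_{x_t}(0)) < -\sqrt{\rho\epsilon}$, then because $\{s_j\}$ is exactly a perturbed-gradient-descent trajectory on the Euclidean function $\hat f_{x_t}$ restricted to a ball of radius $b$ (large enough, under the parameter choices, to contain the whole escape trajectory with high probability by the same radius bounds Jin et al. establish), Jin et al.'s ``improve or localize'' / small-stuck-region argument~\cite{Jin2019} applies essentially verbatim and yields $\hat f_{x_t}(s_{\mathscr{T}}) \le \hat f_{x_t}(0) - \mathscr{F}$ with probability at least $1 - \delta'$ for an appropriate per-episode failure probability $\delta'$; since $\hat f_{x_t}(s_{\mathscr{T}}) = f(x_{t+\mathscr{T}})$ and $\hat f_{x_t}(0) = f(x_t)$, this is a genuine decrease of $f$ on $\calM$ of $\mathscr{F}$ over $\mathscr{T}$ iterations. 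If $\|\grad f(x_t)\| \le \epsilon$ but the Hessian condition already holds, $x_t$ is one of the desired points; the perturbation may increase $f$ by at most $O(\eta L r^2) = $ something negligible relative to $\mathscr{F}$, which is absorbed into constants.

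The final step is the counting/amortization argument. Let $N_1$ be the number of large-gradient iterations and $N_2$ the number of perturbation episodes that produce an $\mathscr{F}$-decrease (the ``escaping'' episodes); since $f$ is lower bounded by $f^*$ and each such event strictly decreases $f$, we get $N_1 \cdot \tfrac{\eta\epsilon^2}{2} + N_2 \cdot \mathscr{F} \lesssim f(x_0) - f^*$ (accounting for the bounded-below increases). Combined with the definition of $T$ in~\eqref{eqnT} as eight times the max of the three natural budgets, this forces PRGD to encounter at least two iterates with $\|\grad f(x_t)\| \le \epsilon$ that are \emph{not} the start of an escaping episode — and a union bound over the at most $T/\mathscr{T}+1$ episodes, using~\eqref{eq:chi_ineq} to make the total failure probability $\le \delta$, shows that with probability $\ge 1-\delta$ at least two-thirds of the small-gradient iterates visited satisfy both the gradient bound and $\lambda_{\min}(\nabla^2\hat f_{x_t}(0)) \ge -\sqrt{\rho\epsilon}$. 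The gradient-query count is $T$ steps plus at most one trailing episode of $\mathscr{T}$ steps, hence $\le 2T$, with no Hessian queries since $\nabla^2\hat f$ is never evaluated.

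\textbf{Main obstacle.} The genuinely new difficulty — everything else is bookkeeping over Jin et al.'s framework — is controlling the discrepancy between ``steps on the manifold'' and ``steps in the tangent space'': once we leave $x_t$ and commit to an episode in $\T_{x_t}\calM$, the function we descend is $\hat f_{x_t}$, whose Hessian at $0$ is $\Hess f(x_t) + W_0$, and whose Lipschitz constants only hold in $B_{x_t,b}(0)$. So the hard part is (i) verifying that the entire Euclidean escape trajectory $\{s_j\}_{j=0}^{\mathscr{T}}$ stays inside $B_{x_t,b}(0)$ with high probability — using Jin et al.'s localization bound together with the specific smallness of $r,\eta,\mathscr{T}$ forced by~\eqref{params} and the hypotheses $\epsilon \le b^2\rho$, $L \ge \sqrt{\rho\epsilon}$ — so that the clipping branch of \textproc{TangentSpaceSteps} is not triggered on the good event; and (ii) making peace with the fact that the second-order guarantee is phrased via $\nabla^2\hat f_{x_t}(0)$ rather than $\Hess f(x_t)$, which is the clean statement that falls out of the analysis and which, via Lemma~\ref{lems} and Assumption~\ref{as:A4}, differs from the intrinsic Hessian by the controlled operator $W_0$.
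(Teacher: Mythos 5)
Your proposal is correct and follows essentially the same route as the paper's proof: the same decomposition into large-gradient RGD steps and tangent-space perturbation episodes, the same two key lemmas (deterministic sufficient decrease when $\norm{\grad f(x_t)}>\epsilon$, and a high-probability $\mathscr{F}$-decrease via Jin et al.'s stuck-region/coupling argument when $\lambda_{\min}(\nabla^2\hat f_{x_t}(0))\leq-\sqrt{\rho\epsilon}$), and the same amortized counting plus union bound over episodes using~\eqref{eq:chi_ineq}. You also correctly flag the two genuine friction points the paper handles, namely keeping the trajectory inside $B_{x_t,b}(0)$ and stating the second-order guarantee in terms of $\nabla^2\hat f_{x_t}(0)$ rather than $\Hess f(x_t)$.
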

%

By Assumption \ref{as:A4} and Lemma \ref{lems}, $\nabla^2 \hat{f}_{x_t}(0)$ is close to $\Hess f(x_t)$, which allows us to conclude:

\begin{corollary}\label{thmMR}
Assume $f$ satisfies Assumptions~\ref{as:A1}, \ref{as:A2}, \ref{as:A3} and~\ref{as:A4}.  For an arbitrary $x_0 \in \calM$, with $0 < \epsilon \leq \min\{\rho / \beta^2,b^2\rho\}$,
$L \geq \sqrt{\rho\epsilon}$, $\epsilon^{3/2} \leq 3\sqrt{\rho} \left( f(x_0) - f^* \right)$ and $\delta \in (0,1)$, choose $\eta, r, \mathscr{T}$ as in~\eqref{params}.  Then, setting $T$ as in~\eqref{eqnT}, $PRGD(x_0, \eta, r, \mathscr{T}, \epsilon, T, b)$ visits at least two iterates $x_t \in \calM$ satisfying $\norm{\grad f(x_t)}\leq\epsilon$. With probability at least $1-\delta$, at least two-thirds of those iterates are $(4\epsilon)$-second-order points. If $\beta = 0$ (that is, the retraction is second order), then the same claim holds for $\epsilon$-second-order points instead of $4\epsilon$. The algorithm uses at most $T + \mathscr{T} \leq 2 T$ gradient queries.
\end{corollary}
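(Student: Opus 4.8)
The plan is to obtain Corollary~\ref{thmMR} from Theorem~\ref{thmMaster} by upgrading the conclusion on $\nabla^2 \hat f_{x_t}(0)$ to the same conclusion on the Riemannian Hessian $\Hess f(x_t)$. First I would verify that the hypotheses of Theorem~\ref{thmMaster} follow from those of the corollary: the bound $\epsilon \leq \min\{\rho/\beta^2, b^2\rho\}$ in particular gives $\epsilon \leq b^2\rho$, while $L \geq \sqrt{\rho\epsilon}$ and $\epsilon^{3/2} \leq 3\sqrt{\rho}(f(x_0)-f^*)$ are assumed verbatim; with $\eta, r, \mathscr{T}$ and the horizon $T$ chosen exactly as in the theorem, Theorem~\ref{thmMaster} applies. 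It yields at least two iterates $x_t$ with $\norm{\grad f(x_t)} \leq \epsilon$ and, with probability at least $1-\delta$, at least two-thirds of them additionally satisfy $\lambda_{\min}(\nabla^2 \hat f_{x_t}(0)) \geq -\sqrt{\rho\epsilon}$. The gradient-query budget $T + \mathscr{T} \leq 2T$, with no function or Hessian queries, carries over unchanged.

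The heart of the argument is a deterministic conversion applied to each such iterate $x = x_t$. Evaluating Lemma~\ref{lems} at $s = 0_x$ and using $T_{x,0_x} = \D\Retr_x(0_x) = \Id$ from Definition~\ref{def:retr}, we get $\nabla \hat f_x(0) = \grad f(x)$ and $\nabla^2 \hat f_x(0) = \Hess f(x) + W_0$, where $W_0$ is the symmetric operator determined by $\inner{W_0[u]}{u} = \inner{\grad f(x)}{\gamma_{x,u}''(0)}$ with $\gamma_{x,u}(t) = \Retr_x(tu)$. Since $\gamma_{x,cu}$ is merely the time-rescaling $t \mapsto \gamma_{x,u}(ct)$, its initial (covariant) acceleration satisfies $\gamma_{x,cu}''(0) = c^2\gamma_{x,u}''(0)$, so this quadratic form is well defined, and Assumption~\ref{as:A4} applied to unit vectors gives $\opnormsmall{W_0} \leq \beta\norm{\grad f(x)} \leq \beta\epsilon$. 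Consequently $\lambda_{\min}(\Hess f(x)) \geq \lambda_{\min}(\nabla^2 \hat f_x(0)) - \opnormsmall{W_0} \geq -\sqrt{\rho\epsilon} - \beta\epsilon$, and the hypothesis $\epsilon \leq \rho/\beta^2$ forces $\beta\epsilon \leq \sqrt{\rho\epsilon}$, hence $\lambda_{\min}(\Hess f(x)) \geq -2\sqrt{\rho\epsilon} = -\sqrt{4\rho\epsilon}$. Together with $\norm{\grad f(x)} \leq \epsilon \leq 4\epsilon$, this is precisely the statement that $x$ is a $(4\epsilon)$-second-order critical point (Definition~\ref{def:epssocpmanifold}). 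When $\beta = 0$ the retraction is second order, so $W_0 = 0$, $\nabla^2 \hat f_x(0) = \Hess f(x)$, and the same iterates are $\epsilon$-second-order critical points without loss.

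Combining the two: on the probability-$(1-\delta)$ event delivered by Theorem~\ref{thmMaster}, at least two-thirds of the visited small-gradient iterates meet the pullback-Hessian bound, and the conversion step promotes each of those to a $(4\epsilon)$-second-order critical point (or an $\epsilon$-second-order critical point when $\beta=0$), which is the claim; the iterate and query counts transfer verbatim. I expect the only point requiring care to be the estimate $\opnormsmall{W_0} \leq \beta\norm{\grad f(x)}$: one must justify the quadratic homogeneity that lets Assumption~\ref{as:A4}, stated only for unit-norm tangent vectors, control the full operator $W_0$, and recall that for a symmetric operator the operator norm equals the maximum absolute value of its associated quadratic form on the unit sphere. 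Everything else is a single application of Weyl's perturbation bound together with the arithmetic fact that $\beta\epsilon \leq \sqrt{\rho\epsilon}$ is equivalent to $\epsilon \leq \rho/\beta^2$.
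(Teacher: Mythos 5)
Your proposal is correct and follows essentially the same route as the paper: invoke Theorem~\ref{thmMaster}, then use Lemma~\ref{lems} at $s=0$ with $T_{x,0}=\Id$ to write $\nabla^2\hat f_x(0)=\Hess f(x)+W_0$, bound $\opnormsmall{W_0}\leq\beta\epsilon$ via Assumption~\ref{as:A4}, and conclude $\lambda_{\min}(\Hess f(x))\geq-\sqrt{\rho\epsilon}-\beta\epsilon\geq-\sqrt{4\rho\epsilon}$ using $\epsilon\leq\rho/\beta^2$. Your explicit justification of the quadratic homogeneity of $\gamma_{x,u}''(0)$ and of why the unit-sphere bound on the quadratic form controls $\opnormsmall{W_0}$ is a welcome point of care that the paper's proof leaves implicit.
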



Assume $\calM = \reals^d$ with standard inner product and standard retraction $\Retr_x(s) = x+s$.  As in~\cite{Jin2019}, assume $f\colon\reals^d \rightarrow\reals$ is lower bounded, $\nabla f$ is $L$-Lipschitz in $\reals^d$, and $\nabla^2 f$ is $\rho$-Lipschitz in $\reals^d$.  Then, Assumptions \ref{as:A1}, \ref{as:A2} and \ref{as:A3} hold with $b = +\infty$. Furthermore, Assumption~\ref{as:A4} holds with $\beta = 0$ so that $\nabla^2\hat{f}_x(0)  = \Hess f(x) = \nabla^2 f(x)$ (Lemma~\ref{lems}).  For all $x\in\calM$, $\nabla \hat{f}_x(s)$ has Lipschitz constant $\ell = L$ since $\hat{f}_x(s) = f(x+s)$.  Therefore, using $b = +\infty$, $\ell = L$ and choosing $\eta, r, \mathscr{T}$ as in~\eqref{params}, PRGD reduces to PGD, and Theorem \ref{thmMaster} recovers the result of Jin et al.~\cite{Jin2019}: this confirms that the present result is a bona fide generalization.

For the important special case of compact manifolds, Lemmas~\ref{lem1} and~\ref{lem:Aell} yield:
\begin{corollary}\label{compactCor}
Assume $\calM$ is a compact Riemannian manifold equipped with a retraction $\Retr$, and $f\colon\calM\rightarrow\reals$ is three times continuously differentiable.  Pick an arbitrary $b>0$. Then, Assumptions~\ref{as:A1}, \ref{as:A2}, \ref{as:A3}, \ref{as:A4} hold for some $L>0, \rho>0$, $\beta \geq 0$, so that Corollary~\ref{thmMR} applies with some $\ell \in [L, L+\rho b]$.
\end{corollary}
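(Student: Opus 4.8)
The plan is to obtain this corollary as a one-line composition of two results already established in the excerpt, Lemma~\ref{lem1} and Lemma~\ref{lem:Aell}, followed by a direct invocation of Corollary~\ref{thmMR}. First I would apply Lemma~\ref{lem1}: since $\calM$ is compact and $f$ is three times continuously differentiable, for the prescribed $b>0$ it provides constants $f^*\in\reals$, $L>0$, $\rho>0$, $\beta\ge 0$ for which Assumptions~\ref{as:A1}--\ref{as:A4} all hold (reading the single radius $b$ of Lemma~\ref{lem1} as the common value $b_1=b_2$ appearing in Assumptions~\ref{as:A2} and~\ref{as:A3}, so that $\min\{b_1,b_2\}=b$).

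Next, with Assumptions~\ref{as:A2} and~\ref{as:A3} in hand, Lemma~\ref{lem:Aell} yields an $\ell\in[L,L+\rho b]$ such that $\nabla\hat f_x$ is $\ell$-Lipschitz on $B_{x,b}(0)$ uniformly over $x\in\calM$. At this point the hypotheses of Corollary~\ref{thmMR} pertaining to the manifold, the objective, and the retraction are all satisfied, and its admissible parameter range ($0<\epsilon\le\min\{\rho/\beta^2,\ b^2\rho\}$, $L\ge\sqrt{\rho\epsilon}$, $\epsilon^{3/2}\le 3\sqrt\rho(f(x_0)-f^*)$, $\delta\in(0,1)$) is nonempty because $\rho$ and $\beta$ are finite and $b>0$. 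Hence Corollary~\ref{thmMR} applies with this $\ell$, which is exactly the assertion.

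So there is no genuine obstacle at the level of this corollary; the substantive content sits inside Lemma~\ref{lem1}, whose proof is deferred to the appendix. For orientation, the mechanism there is: $\Retr$ is smooth on the tangent bundle, so the ``$b$-disk subbundle'' $\{(x,s): x\in\calM,\ \norm{s}\le b\}$ is compact (as $\calM$ is), and any continuous function of $(x,s)$ built from $f$, $\Retr_x$ and their derivatives up to the relevant order---in particular $\nabla^2\hat f_x(s)$ and the accelerations $\gamma_{x,s}''(0)$ of Lemma~\ref{lems}---attains a finite maximum there; differentiating $\hat f_x=f\circ\Retr_x$ and $\gamma_{x,s}(t)=\Retr_x(ts)$ and applying the fundamental theorem of calculus along segments inside $B_{x,b}(0)$ converts those uniform bounds on second/third-order data into the Lipschitz-type estimates of Assumptions~\ref{as:A2}--\ref{as:A3} and the acceleration bound of Assumption~\ref{as:A4}, while Assumption~\ref{as:A1} is immediate since a continuous function on a compact set is bounded below.

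The only point demanding care in that argument---and the reason compactness is essential---is extracting all the constants \emph{uniformly in} $x$; once that is done, the present corollary follows with nothing further to check.
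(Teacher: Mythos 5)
Your proposal is correct and matches the paper's argument exactly: the paper derives Corollary~\ref{compactCor} directly from Lemma~\ref{lem1} (constants from compactness of the $b$-disk subbundle of $\T\calM$) combined with Lemma~\ref{lem:Aell} for $\ell$, then invokes Corollary~\ref{thmMR}. Your sketch of the underlying mechanism in Lemma~\ref{lem1} also agrees with the appendix proof.
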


\begin{remark}
PRGD, like PGD (Algorithm 4 in~\cite{Jin2019}), does not specify which iterate is an $\epsilon$-second-order critical point.  However, it is straightforward to include a termination condition in PRGD which halts the algorithm and returns a suspected $\epsilon$-second-order critical point. Indeed, Jin et al.\ include such a termination condition in their original PGD algorithm~\cite{Jina2017}, which here would go as follows:  After performing a perturbation and $\mathscr{T}$ (tangent space) steps in $\T_{x_t}\calM$, return $x_t$ if $\hat{f}_{x_t}(s_{\mathscr{T}}) - \hat{f}_{x_t}(0) > -f_{\mathrm{thres}}$, i.e., the function value does not decrease enough.  The termination condition requires a threshold $f_{\mathrm{thres}}$ which is balanced like the other parameters of PRGD in~\eqref{params}.
\end{remark}


\subsection{Main proof ideas}
Theorem \ref{thmMaster} follows from the following two lemmas which we prove in the appendix.  These lemmas state that, in each round of the while-loop in PRGD, if $x_t$ is not at an $\epsilon$-second-order critical point, PRGD makes progress, that is, decreases the cost function value (the first lemma is deterministic, the second one is probabilistic).  Yet, the value of $f$ on the iterates can only decrease so much because $f$ is bounded below by $f^*$.  Therefore, the probability that PRGD does not visit an $\epsilon$-second-order critical point is low.
\begin{lemma}\label{lemDog}
Under Assumptions~\ref{as:A2} and~\ref{as:A3}, set $\eta = 1/\ell$ for some $\ell \geq L$. If $x\in\calM$ satisfies $\norm{\grad f(x)} > \epsilon$ with $\epsilon \leq b^2 \rho$ and $L \geq \sqrt{\rho\epsilon}$, then,
$$f(\textproc{TangentSpaceSteps}(x, 0, \eta, b, 1)) - f(x) \leq -\eta \epsilon^2/2.$$
\end{lemma}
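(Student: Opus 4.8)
The plan is to unwind what $\textproc{TangentSpaceSteps}(x, 0, \eta, b, 1)$ computes, reduce it to a single (possibly clipped) gradient step in the tangent space, and then invoke the standard descent lemma on the ball $B_{x,b}(0)$, where the pullback has $\ell$-Lipschitz gradient. First I would note that with $s_0 = 0$ and $\mathscr{T} = 1$ the procedure sets $s_1 = -\eta\,\nabla\hat f_x(0)$; by Lemma~\ref{lems} together with $\D\Retr_x(0_x) = \Id$ (Definition~\ref{def:retr}) we have $\nabla\hat f_x(0) = \grad f(x)$, so $s_1 = -\eta\grad f(x)$. The procedure returns $\Retr_x(s_1)$ if $\norm{s_1} < b$; otherwise it returns $\Retr_x(s)$ with $s = -\alpha\eta\grad f(x)$ for some $\alpha \in (0,1]$ chosen so that $\norm{s} = b$. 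In either case the returned point is $\Retr_x(-c\eta\grad f(x))$ for some $c \in (0,1]$, with $-c\eta\grad f(x) \in B_{x,b}(0)$, and $f(\Retr_x(-c\eta\grad f(x))) = \hat f_x(-c\eta\grad f(x))$ by definition of the pullback.

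Next, since $\nabla\hat f_x$ is $\ell$-Lipschitz on the convex ball $B_{x,b}(0)$ by Lemma~\ref{lem:Aell}, the descent lemma along the segment from $0$ to $-c\eta\grad f(x)$ gives
\[
	\hat f_x(-c\eta\grad f(x)) - \hat f_x(0) \;\leq\; -c\eta\sqnorm{\grad f(x)} + \tfrac{\ell}{2}c^2\eta^2\sqnorm{\grad f(x)} \;=\; -c\eta\Big(1-\tfrac{c}{2}\Big)\sqnorm{\grad f(x)},
\]
using $\eta = 1/\ell$; since $c \leq 1$, the right-hand side is at most $-\tfrac{c\eta}{2}\sqnorm{\grad f(x)}$.

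Then I would split into two cases. If $\norm{s_1} < b$, then $c = 1$ and the bound reads $-\tfrac{\eta}{2}\sqnorm{\grad f(x)} < -\tfrac{\eta}{2}\epsilon^2$, since $\norm{\grad f(x)} > \epsilon$. If $\norm{s_1} \geq b$, then $c = \alpha$ with $\alpha\eta\norm{\grad f(x)} = \norm{s} = b$, hence $\alpha\norm{\grad f(x)} = b/\eta = b\ell$, so the bound becomes $-\tfrac{\eta}{2}\big(\alpha\norm{\grad f(x)}\big)\norm{\grad f(x)} = -\tfrac{\eta b\ell}{2}\norm{\grad f(x)} < -\tfrac{\eta b\ell}{2}\epsilon = -\tfrac{b\epsilon}{2}$. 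To close this case I would show $b\ell \geq \epsilon$: from $\ell \geq L \geq \sqrt{\rho\epsilon}$ and $\epsilon \leq b^2\rho$ one gets $b \geq \sqrt{\epsilon/\rho}$, hence $b\ell \geq \sqrt{\epsilon/\rho}\cdot\sqrt{\rho\epsilon} = \epsilon$; therefore $b \geq \eta\epsilon$ and $-\tfrac{b\epsilon}{2} \leq -\tfrac{\eta\epsilon^2}{2}$, completing the proof.

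I expect the clipped case to be the only real obstacle — and also the only place where the scalar hypotheses enter: a single gradient step can exit the ball $B_{x,b}(0)$ on which the Lipschitz bound is valid, and one must verify that the clipped step still decreases $f$ by at least $\eta\epsilon^2/2$. This is exactly where $\epsilon \leq b^2\rho$ and $L \geq \sqrt{\rho\epsilon}$ combine into $b\ell \geq \epsilon$; everything else is the Euclidean descent lemma applied to the pullback $\hat f_x$.
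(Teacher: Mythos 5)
Your proof is correct and follows essentially the same route as the paper's: reduce the call to a single (possibly clipped) gradient step of size $\alpha\eta$, apply the descent lemma on the pullback, and in the clipped case combine $\epsilon \leq b^2\rho$ with $\ell \geq L \geq \sqrt{\rho\epsilon}$ to get $b\ell \geq \epsilon$ (equivalently $\eta\epsilon \leq b$), which converts the decrease $b\epsilon/2$ into $\eta\epsilon^2/2$. The only cosmetic difference is that you invoke Lemma~\ref{lem:Aell} for $\ell$-Lipschitzness on the whole ball, whereas the paper gets the needed Lipschitz bound along the segment from the origin directly from Assumption~\ref{as:A2}.
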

\begin{lemma}\label{lemCat}
Under Assumptions~\ref{as:A2} and~\ref{as:A3}, let $x\in\calM$ satisfy both $\norm{\grad f(x)} \leq \epsilon$ and $\lambda_{\min}(\nabla^2 \hat{f}_x(0)) \leq -\sqrt{\rho\epsilon}$ with $\epsilon \leq b^2\rho$ and $L \geq \sqrt{\rho\epsilon}$.  Set $\eta, r, \mathscr{T}, \mathscr{F}$ as in \eqref{params} and \eqref{eq:FLscr}.  Let $s_0 = \eta \xi$ with $\xi \sim \text{Uniform}(B_{x,r}(0))$.  Then,
$$\mathbb{P}\big[f(\textproc{TangentSpaceSteps}(x, s_0, \eta, b, \mathscr{T})) - f(x) \leq -\mathscr{F}/2\big] \geq 1 - \frac{\ell\sqrt{d}}{\sqrt{\rho\epsilon}}2^{10-\chi/2}.$$
\end{lemma}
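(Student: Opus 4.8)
The plan is to lift Jin et al.'s Euclidean escape analysis~\cite{Jin2019} to the tangent space $\T_{x}\calM$. This is legitimate because \textproc{TangentSpaceSteps} performs ordinary constant-step ($\eta = 1/\ell$) gradient descent on the pullback $\hat f_x$, which lives on the \emph{vector space} $\T_x\calM$, and applies $\Retr_x$ only at the very end. First I would record the local regularity of $\hat f_x$ that drives the argument: since $T_{x,0} = \D\Retr_x(0_x) = \Id$, Lemma~\ref{lems} gives $\nabla\hat f_x(0) = \grad f(x)$, so $\norm{\nabla\hat f_x(0)}\le\epsilon$; by hypothesis $\lambda_{\min}(\nabla^2\hat f_x(0)) \le -\sqrt{\rho\epsilon}$; Lemma~\ref{lem:Aell} makes $\nabla\hat f_x$ $\ell$-Lipschitz on $B_{x,b}(0)$; and since $b\le b_2$, Assumption~\ref{as:A3} makes $\nabla^2\hat f_x$ $\rho$-Lipschitz on $B_{x,b}(0)$. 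These are exactly the hypotheses under which perturbed gradient descent escapes a strict saddle in a linear space; the only genuinely new point, compared with~\cite{Jin2019}, is that this regularity holds only inside $B_{x,b}(0)$, so the whole trajectory must be shown to remain there.

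Second, I would establish the standard improve-or-localize estimate: while iterates stay in $B_{x,b}(0)$, $\ell$-smoothness with $\eta = 1/\ell$ gives $\hat f_x(s_{j+1}) \le \hat f_x(s_j) - \tfrac{\eta}{2}\norm{\nabla\hat f_x(s_j)}^2$, and telescoping plus Cauchy--Schwarz yields $\norm{s_t - s_0}^2 \le 2\eta t\,(\hat f_x(s_0) - \hat f_x(s_t))$, which extends to the partial last step performed at the ball boundary. Hence either $\hat f_x$ drops by at least $\mathscr{F}$ below $\hat f_x(s_0)$ within $\mathscr{T}$ steps (counting the returned point), or no clipping occurs and every iterate lies within $\sqrt{2\eta\mathscr{T}\mathscr{F}}$ of $s_0$; with~\eqref{params}--\eqref{eq:FLscr} one computes $\sqrt{2\eta\mathscr{T}\mathscr{F}} < \mathscr{L}$, and since $\epsilon\le b^2\rho$ (so $\sqrt{\epsilon/\rho}\le b$) and~\eqref{eq:chi_ineq} forces $\chi$ to be large, the trajectory then stays within $\eta r + \mathscr{L} \le b$ of the origin; moreover the telescoped bound shows each gradient step in this ``stuck'' regime has length $O(\mathscr{L}/\sqrt{\mathscr{T}})$, so clipping is truly impossible there. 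Define $X_{\mathrm{stuck}} = \{\xi \in B_{x,r}(0) : \hat f_x(s_{\mathscr{T}}^{\mathrm{out}}) > \hat f_x(s_0) - \mathscr{F}\}$; by this dichotomy, every $\xi\in X_{\mathrm{stuck}}$ produces a clipping-free trajectory confined to $B_{x,b}(0)$ and staying within $\mathscr{L}$ of $s_0$ --- precisely the setting of the Euclidean coupling argument.

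The crux is the two-point coupling. Let $e_1\in\T_x\calM$ be a unit eigenvector of $H := \nabla^2\hat f_x(0)$ for its smallest eigenvalue $-\gamma$, with $\gamma\ge\sqrt{\rho\epsilon}$. I would show $X_{\mathrm{stuck}}$ contains no two points $\xi,\xi'$ with $\xi - \xi' = \mu e_1$ for $\mu$ above a threshold $\mu_0$: if it did, both runs are stuck, so both trajectories $\{s_j\},\{s_j'\}$ stay within $\mathscr{L}$ of their (nearby) starts, hence within $\eta r + \mathscr{L}\le b$ of the origin, where $\nabla^2\hat f_x$ deviates from $H$ by at most $\rho(\eta r + \mathscr{L})$ in operator norm. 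Writing $\hat s_j = s_j - s_j'$, a mean-value expansion gives $\hat s_{j+1} = (\Id - \eta H)\hat s_j - \eta\Delta_j\hat s_j$ with $\norm{\Delta_j}\le\rho(\eta r + \mathscr{L})$; since $\Id - \eta H$ expands $e_1$ by $1+\eta\gamma > 1$ while $\eta\norm{\Delta_j}\ll\eta\gamma$ --- here the balancing of $r$, $\mathscr{F}$ and $\gamma$ through $\chi$ is used --- an induction following~\cite{Jin2019} shows the $e_1$-component of $\hat s_j$ stays dominant and $\norm{\hat s_j}$ grows like $(1+\eta\gamma)^j$. Since $\mathscr{T}\eta\gamma\ge\chi$ (from $\mathscr{T}=\ell\chi/\sqrt{\rho\epsilon}$ and $\gamma\ge\sqrt{\rho\epsilon}$), $(1+\eta\gamma)^{\mathscr{T}}\ge 2^{\chi}$, so taking $\mu_0 = \Theta(\ell\mathscr{L})\,2^{-\chi}$ forces $\norm{\hat s_{\mathscr{T}}} > 2\mathscr{L} + \eta\mu_0 \ge \norm{s_{\mathscr{T}}-s_0}+\norm{s_0-s_0'}+\norm{s_0'-s_{\mathscr{T}}'}$, a contradiction. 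Thus the projection of $X_{\mathrm{stuck}}$ onto $\mathbb{R}e_1$ has diameter $<\mu_0$, so $X_{\mathrm{stuck}}$ lies in a slab of width $\mu_0$; comparing the volume of such a slab inside the $d$-ball $B_{x,r}(0)$ to that of $B_{x,r}(0)$ gives $\mathbb{P}[\xi\in X_{\mathrm{stuck}}] \le \tfrac{\mu_0}{r}\sqrt{d}$, and substituting~\eqref{params}--\eqref{eq:FLscr} (with the powers of $\chi$ and the constants absorbed thanks to~\eqref{eq:chi_ineq}) bounds this by $\tfrac{\ell\sqrt{d}}{\sqrt{\rho\epsilon}}\,2^{10-\chi/2}$.

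Finally, for $\xi\notin X_{\mathrm{stuck}}$ the returned point has $\hat f_x(s_{\mathscr{T}}^{\mathrm{out}}) \le \hat f_x(s_0) - \mathscr{F}$, while the perturbation itself raises the value by at most $\innersmall{\nabla\hat f_x(0)}{s_0} + \tfrac{\ell}{2}\norm{s_0}^2 \le \epsilon\eta r + \tfrac{\ell}{2}\eta^2 r^2 \le \mathscr{F}/2$ (using $r=\epsilon/(400\chi^3)$, $\eta=1/\ell$ and $\ell\ge\sqrt{\rho\epsilon}$). Since $f(\textproc{TangentSpaceSteps}(x,s_0,\eta,b,\mathscr{T})) = \hat f_x(s_{\mathscr{T}}^{\mathrm{out}})$ and $f(x) = \hat f_x(0)$, adding these bounds gives $f(\textproc{TangentSpaceSteps}(x,s_0,\eta,b,\mathscr{T})) - f(x) \le -\mathscr{F}/2$ whenever $\xi\notin X_{\mathrm{stuck}}$, i.e.\ with probability at least $1 - \tfrac{\ell\sqrt{d}}{\sqrt{\rho\epsilon}}2^{10-\chi/2}$. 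I expect the main obstacle to be the confinement step: unlike the Euclidean setting of~\cite{Jin2019}, where regularity is global, here one must rule out that any coupled trajectory --- or any clipped run --- ever leaves $B_{x,b}(0)$, since Assumptions~\ref{as:A2}--\ref{as:A3} hold only there, while simultaneously keeping the error terms $\Delta_j$ in the coupling recursion small relative to the geometric growth driven by $-\gamma$. Getting the constants in~\eqref{params} to make $\eta r + \mathscr{L}$ comfortably smaller than $b$ and to collapse the measure bound to $2^{10-\chi/2}$ is where the careful bookkeeping lies.
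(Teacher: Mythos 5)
Your proposal is correct and follows essentially the same route as the paper: confine the (would-be stuck) trajectories to $B_{x,b}(0)$ via improve-or-localize, run Jin et al.'s two-point coupling along $e_1$ to bound the width of the stuck region by $\eta\omega$ with $\omega=\Theta(\ell\mathscr{L})2^{-\chi}$, compare volumes to get the $\tfrac{\omega}{r}\sqrt{d}$ probability bound, and absorb the $\hat f_x(s_0)-\hat f_x(0)\le \epsilon\eta r+\ell\eta^2r^2/2\le\mathscr{F}/2$ overhead at the end. The only cosmetic differences (bounding $\|\Delta_j\|$ by $\rho(\eta r+\mathscr{L})$ instead of $\rho\mathscr{L}$, and ruling out clipping via step-length rather than by contradiction on the clipped point's norm) do not change the argument.
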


Lemma~\ref{lemDog} states that we are guaranteed to make progress if the gradient is large.  This follows from the sufficient decrease of RGD steps.  Lemma \ref{lemCat} states that, with perturbation, GD on the pullback escapes a saddle point with high probability. Lemma~\ref{lemCat} is analogous to Lemma 11 in~\cite{Jin2019}.

Let $\mathcal{X}_{\mathrm{stuck}}$ be the set of tangent vectors $s_0$ in $B_{x,\eta r}(0)$ for which GD on the pullback starting from $s_0$ does not escape the saddle point, i.e., the function value does not decrease enough after $\mathscr{T}$ iterations.  Following Jin et al.'s analysis~\cite{Jin2019}, we bound the width of this ``stuck region'' (in the direction of the eigenvector $e_1$ associated with the minimum eigenvalue of the Hessian of the pullback, $\nabla^2\hat{f}_x(0)$).  Like Jin et al., we do this with a coupling argument, showing that given two GD sequences with starting points sufficiently far apart, one of these sequences must escape.  This is formalized in Lemma \ref{lem:lemma_13} of the appendix.  A crucial observation to prove Lemma \ref{lem:lemma_13} is that, if the function value of GD iterates does not decrease much, then these iterates must be localized; this is formalized in Lemma~\ref{lem:lemma_12} of the appendix, which Jin et al.\ call ``improve or localize.''

We stress that the stuck region concept, coupling argument, improve or local paradigm, and details of the analysis are due to Jin et al.~\cite{Jin2019}: our main contribution is to show a clean way to generalize the algorithm to manifolds in such a way that the analysis extends with little friction. We believe that the general idea of separating iterations between the manifold and the tangent spaces to achieve different objectives may prove useful to generalize other algorithms as well.

\section{About the role of curvature of the manifold} \label{sec:curvature}

As pointed out in the introduction, concurrently with our work, Sun et al.~\cite{Sun2019prgd} have proposed another generalization of PGD to manifolds. Their algorithm executes all steps on the manifold directly (as opposed to our own, which makes certain steps in the tangent spaces), and moves around the manifold using the exponential map. To carry out their analysis, Sun et al.\ assume $f$ is regular in the following way. The Riemannian gradient is Lipschitz continuous in a Riemannian sense, namely,
\begin{align*}
	\forall x, y \in \calM, && \|\grad f(y) - \Gamma_x^y \grad f(x)\| & \leq L \dist(x, y),
\end{align*}
where $\Gamma_x^y \colon \T_x\calM \to \T_y\calM$ denotes parallel transport from $x$ to $y$ along any minimizing geodesic, and $\dist$ is the Riemannian distance. These notions are well defined if $\calM$ is a connected, complete manifold. Similarly, they assume the Riemannian Hessian of $f$ is Lipschitz continuous in a Riemannian sense:
\begin{align*}
	\forall x, y \in \calM, && \| \Hess f(y) - \Gamma_x^y \circ \Hess f(x) \circ \Gamma_y^x \| & \leq \rho \dist(x, y),
\end{align*}
in the operator norm. Using (and improving) sophisticated inequalities from Riemannian geometry, they map the perturbed sequences back to tangent spaces for analysis, where they run an adapted version of Jin et al.'s argument. In so doing, it appears to be crucial to use the exponential map, owing to its favorable interplay with parallel transport along geodesics and Riemannian distance, providing a good fit with the regularity conditions above.

As they map sequences back from the manifold to a common tangent space through the inverse of the exponential map, the Riemannian curvature of the manifold comes into play. Consequently, they assume $\calM$ has bounded sectional curvature (both from below and from above), and these bounds on curvature come up in their final complexity result: constants degrade if the manifold is more curved. 

Since Riemannian curvature does not occur in our own complexity result for PRGD, it is legitimate to ask: is curvature supposed to occur? If so, it must be hidden in our analysis, for example in the regularity assumptions we make, which are expressed in terms of pullbacks rather than with parallel transports. And indeed, in several attempts to deduce our own assumptions from those of Sun et al., invariably, we had to degrade $L$ and $\rho$ as a function of curvature---minding that these are only bounds. On the other hand, under the assumptions of Sun et al., one can deduce that the regularity assumptions required in~\cite{trustMan,arcMan} for the analysis of Riemannian gradient descent, trust regions and adaptive regularization by cubics hold with the exponential map, leading to curvature-free complexity bounds for all three algorithms. Thus, it is not clear that curvature should occur.

We believe this poses an interesting question regarding the complexity of optimization on manifolds: to what extent should it be influenced by curvature of the manifold? We intend to study this.

\section{Perspectives}
To perform PGD (Algorithm 4 of~\cite{Jin2019}), one must know the step size $\eta$, perturbation radius $r$ and the number of steps $\mathscr{T}$ to perform after perturbation.  These parameters are carefully balanced, and their values depend on the smoothness parameters $L$ and $\rho$.  In most situations, we do not know $L$ or $\rho$ (though see Appendix~\ref{sec:PCA} for PCA). An algorithm which does not require knowledge of $L$ or $\rho$ but still has the same guarantees as PGD would be useful. However, that certain regularity parameters must be known seems inevitable, in particular for the Hessian's $\rho$. Indeed, the main theorems make statements about the spectrum of the Hessian, yet the algorithm is not allowed to query the Hessian.

GD equipped with a backtracking line-search method achieves an $\epsilon$-first-order critical point in $O(\epsilon^{-2})$ gradient queries without knowledge of the Lipschitz constant $L$.  At each iterate $x_t$ of GD, backtracking line-search essentially uses function and gradient queries to estimate the gradient Lipschitz parameter near $x_t$.  Perhaps PGD can perform some kind of line-search to locally estimate $L$ and $\rho$.  We note that if $\rho$ is known and we use line-search-type methods to estimate $L$, there are still difficulties applying Jin et al.'s coupling argument.

Jin et al.~\cite{Jin2019} develop a stochastic version of PGD known as PSGD.  Instead of perturbing when the gradient is small and performing $\mathscr{T}$ GD steps, PSGD simply performs a stochastic gradient step and perturbation at each step.  Distinguishing between manifold steps and tangent space steps, we suspect it is possible to develop a Riemannian version of perturbed stochastic gradient descent which achieves an $\epsilon$-second-order critical point in $O(d / \epsilon^4)$ stochastic gradient queries, like PSGD.  This Riemannian version performs a certain number of steps in the tangent space, like PRGD.

More broadly, we anticipate that it should be possible to extend several classical optimization methods from the Euclidean case to the Riemannian case through this approach of running many steps in a given tangent space before retracting. This ought to be particularly beneficial for algorithms whose computations or analysis rely intimately on linear structures, such as for coordinate descent algorithms, certain parallelized schemes, and possibly also accelerated schemes. In preparing the final version of this paper, we found that this idea is also the subject of another paper at NeurIPS 2019, where it is called dynamic trivialization~\cite{lezcano2019trivialization}.


\subsubsection*{Acknowledgments}

We thank Yue Sun, Nicolas Flammarion and Maryam Fazel, authors of~\cite{Sun2019prgd}, for numerous relevant discussions. NB is partially supported by NSF grant DMS-1719558.


\bibliographystyle{plain}
\bibliography{ref}

\newpage

\begin{appendices}

\section{Proof that assumptions hold for compact manifolds}

\begin{proof}[Proof of Lemma~\ref{lem1}]
Since $\calM$ is compact and $f$ is continuous, $f$ is lower bounded by some $f^*$.

Recall $\hat{f}_x(s) = f \circ \Retr_x(s)$.  Define $\phi,\psi\colon\T\calM\rightarrow\reals$ using operator norms by 
$$\phi(x,s) = \norm{\nabla^2 \hat{f}_x(s)} = \norm{\nabla_s^2 (f \circ \Retr(x,s))},$$
$$\psi(x,s) = \norm{\nabla^3 \hat{f}_x(s)} = \norm{\nabla_s^3 (f \circ \Retr(x,s))}.$$
Since $f$ is three times continuously differentiable and $\Retr$ is smooth, $\phi$ and $\psi$ are each continuous on the tangent bundle $\T\calM$.
The set
\begin{align*}
	S_b & = \left\{ (x,s) : x \in\calM,s\in\T_x\calM \text{ with } \norm{s} \leq b \right\}
\end{align*}
is a compact subset of the tangent bundle $\T\calM$ since $\calM$ is compact. Thus, we may define
\begin{align*}
	L & = \max_{(x, s) \in S_b}\phi(x, s), & & \textrm{ and } & \rho & = \max_{(x, s) \in S_b}\psi(x, s),
\end{align*}
so that $\norm{\nabla^2 \hat{f}_x(s)} \leq L$ and $\norm{\nabla^3 \hat{f}_x(s)} \leq \rho$ for all $x\in\calM$ and $s\in B_{x,b}(0)$.  From here, it is clear that Assumptions \ref{as:A2} and \ref{as:A3} are satisfied, for we can just integrate as in eq.~\eqref{usefuleqn} below.

Using the notation from Assumption \ref{as:A4}, the map $\upsilon\colon\T\calM\rightarrow\reals$ given by $\upsilon(x,s) = \norm{\gamma_{x,s}''(0)}$ is continuous since $\Retr$ is smooth.  The set
\begin{align*}
	V_b & = \left\{ (x,s) : x \in\calM, s\in\T_x\calM \text{ with } \norm{s} = 1 \right\}
\end{align*}
is also compact in $\T\calM$. Hence, $\beta = \max_{(x, s) \in V_b}\upsilon(x, s)$ is a valid choice.
\end{proof}



\section{Proofs for the main results}

The proof follows that of Jin et al.~\cite{Jin2019} closely, reusing many of their key lemmas: we repeat some here for convenience, while highlighting the specificities of the manifold case. We consider it a contribution of this paper that, as a result of our distinction between manifold and tangent space steps, there is limited extra friction, despite the significantly extended generality. In this section and the next, all parameters are chosen as in~\eqref{params} and~\eqref{eq:FLscr}.
%
%

We assume $\epsilon \leq b^2 \rho$. We also assume $L \geq \sqrt{\rho\epsilon}$ because otherwise we can reach a point satisfying $\norm{\grad f(x)}\leq\epsilon$ and $\lambda_{\min}(\nabla^2 \hat{f}_x(0)) \geq - \sqrt{\rho\epsilon}$ simply using RGD. Indeed, RGD always finds a point $x\in\calM$ satisfying $\norm{\grad f(x)}\leq\epsilon$, and Assumption~\ref{as:A2} implies $\|\nabla^2\hat{f}_x(0)\| \leq L$ so that $\lambda_{\min}(\nabla^2\hat{f}_x(0)) \geq -L$.  Thus, if $L < \sqrt{\rho\epsilon}$, every point $x\in\calM$ satisfies $\lambda_{\min}(\nabla^2 \hat{f}_x(0)) \geq - \sqrt{\rho\epsilon}$.

We want to prove Theorem \ref{thmMaster}. This theorem follows from the following two lemmas (repeated from Lemmas~\ref{lemDog} and~\ref{lemCat} for convenience), which we prove in Appendix~\ref{next} below. Lemma~\ref{lem:manStep} is deterministic: it is a statement about the cost decrease produced by a single Riemannian gradient step, with bounded step size. Lemma~\ref{lem:lemma_11} is probabilistic, and is analogous to Lemma~11 in~\cite{Jin2019}.
\begin{lemma}\label{lem:manStep}
Under Assumptions~\ref{as:A2} and~\ref{as:A3}, set $\eta = 1/\ell$ for some $\ell \geq L$. If $x\in\calM$ satisfies $\norm{\grad f(x)} > \epsilon$ with $\epsilon \leq b^2 \rho$ and $L \geq \sqrt{\rho\epsilon}$, then, 
$$f(\textproc{TangentSpaceSteps}(x, 0, \eta, b, 1)) - f(x) \leq -\eta \epsilon^2/2.$$
\end{lemma}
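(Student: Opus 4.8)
The plan is to unwrap the call $\textproc{TangentSpaceSteps}(x,0,\eta,b,1)$ and recognize that, since we start at $s_0=0$ and perform a single gradient step, the iterate is $s_1 = s_0 - \eta\nabla\hat f_x(s_0) = -\eta\nabla\hat f_x(0)$, and by Lemma~\ref{lems} (with $T_{x,0}=\D\Retr_x(0)=\Id$, using retraction property~2) we have $\nabla\hat f_x(0)=\grad f(x)$, so $s_1 = -\eta\grad f(x)$ and the output is $\Retr_x(s_1) = \Retr_x(-\eta\grad f(x))$, i.e., exactly one RGD step. The key quantity to control is therefore $f(\Retr_x(s_1)) - f(x) = \hat f_x(s_1) - \hat f_x(0)$, a statement purely about the function $\hat f_x$ on the linear space $\T_x\calM$.

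First I would verify that the step stays inside the ball $B_{x,b}(0)$, so that the ``leaving the ball'' branch of the procedure is never triggered and the conclusion $s_{\mathscr T}=s_1$ is as above. We have $\norm{s_1} = \eta\norm{\grad f(x)}$; since $\eta = 1/\ell \le 1/L$ and, by Lemma~\ref{lem:Aell} applied with the Lipschitz constant $\ell$ (or directly Assumption~\ref{as:A2}), $\norm{\nabla\hat f_x(0)}$ is bounded — actually what is needed is just that $\eta\norm{\grad f(x)} \le b$, which one gets from the hypotheses $\epsilon \le b^2\rho$ and $L \ge \sqrt{\rho\epsilon}$ together with a bound on $\norm{\grad f(x)}$. (If $\norm{\grad f(x)}$ is not a priori bounded, one instead runs the standard descent estimate on the truncated step; but in the regime considered the step size $\eta$ and the size of the ball are balanced precisely so that $\norm{\eta\grad f(x)}\le b$ whenever the descent lemma is applied — this is exactly why $b=\min\{b_1,b_2\}$ enters.) The clean way: since $L\ge\sqrt{\rho\epsilon}$ and $\epsilon\le b^2\rho$, one checks $\eta\epsilon = \epsilon/\ell \le \epsilon/L \le \epsilon/\sqrt{\rho\epsilon} = \sqrt{\epsilon/\rho} \le b$; combined with the assumption that guarantees we only invoke this when the relevant gradient bound holds, the single step remains in the ball.

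Next, the core estimate: since $\nabla\hat f_x$ is $\ell$-Lipschitz on $B_{x,b}(0)$ (Lemma~\ref{lem:Aell}) and both $0$ and $s_1$ lie in that ball, the standard descent lemma gives
$$\hat f_x(s_1) \le \hat f_x(0) + \inner{\nabla\hat f_x(0)}{s_1} + \frac{\ell}{2}\norm{s_1}^2.$$
Substituting $s_1 = -\eta\nabla\hat f_x(0)$ and $\eta = 1/\ell$ yields
$$\hat f_x(s_1) - \hat f_x(0) \le -\eta\norm{\nabla\hat f_x(0)}^2 + \frac{\ell\eta^2}{2}\norm{\nabla\hat f_x(0)}^2 = -\frac{\eta}{2}\norm{\nabla\hat f_x(0)}^2 = -\frac{\eta}{2}\norm{\grad f(x)}^2,$$
and since $\norm{\grad f(x)} > \epsilon$, the right-hand side is at most $-\eta\epsilon^2/2$, which is the claim. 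The only genuinely non-routine point is the bookkeeping in the previous paragraph — confirming that the single gradient step does not exit $B_{x,b}(0)$ so that the $\ell$-Lipschitz-on-the-ball hypothesis of Lemma~\ref{lem:Aell} actually applies; everything else is the classical Euclidean descent computation transplanted verbatim to the linear space $\T_x\calM$, which is the whole point of the ``steps in a tangent space'' formalism.
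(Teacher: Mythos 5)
There is a genuine gap in the first part of your argument. You claim the single step $s_1 = -\eta\nabla\hat f_x(0)$ stays inside $B_{x,b}(0)$, but the hypothesis of the lemma is the \emph{lower} bound $\norm{\grad f(x)} > \epsilon$; nothing in Assumptions~\ref{as:A2}--\ref{as:A3} gives an upper bound on $\norm{\nabla \hat f_x(0)} = \norm{\grad f(x)}$, so $\norm{s_1} = \eta\norm{\grad f(x)}$ can exceed $b$. Your inequality $\eta\epsilon \leq b$ is correct (and is used in the paper), but it bounds $\eta\epsilon$, not $\eta\norm{\grad f(x)}$; the step from one to the other would require $\norm{\grad f(x)} \leq \epsilon$, which is the opposite of what is assumed. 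You acknowledge the issue parenthetically (``one instead runs the standard descent estimate on the truncated step'') but never carry that case out, and it is not a formality: one must check that the truncated step still decreases $f$ by at least $\eta\epsilon^2/2$.

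The paper's proof handles exactly this. The procedure returns $\Retr_x(s_1)$ with $s_1 = -\alpha\eta\nabla\hat f_x(0)$ for some $\alpha \in (0,1]$ and $\norm{s_1}\leq b$; the descent lemma along the segment from $0$ to $s_1$ (which lies in the ball, so Assumption~\ref{as:A2} supplies the needed Lipschitz bound) gives a decrease of at least $\frac{\alpha\eta}{2}\norm{\grad f(x)}^2$. If $\alpha=1$ this is your computation and you are done. If $\alpha<1$, then $\alpha\eta\norm{\grad f(x)} = b$, so the decrease is at least $\frac{b}{2}\norm{\grad f(x)} \geq \frac{b\epsilon}{2} \geq \frac{\eta\epsilon^2}{2}$, the last step using precisely your inequality $\eta\epsilon\leq b$. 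Your core descent computation for the untruncated case is correct and identical to the paper's; what is missing is the truncation branch, which is the only part of the lemma that is not the verbatim Euclidean argument.
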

\begin{lemma}\label{lem:lemma_11}
Under Assumptions~\ref{as:A2} and~\ref{as:A3}, let $x\in\calM$ satisfy both $\norm{\grad f(x)} \leq \epsilon$ and $\lambda_{\min}(\nabla^2 \hat{f}_x(0)) \leq -\sqrt{\rho\epsilon}$ with $\epsilon \leq b^2\rho$ and $L \geq \sqrt{\rho\epsilon}$. 
Set $\eta, r, \mathscr{T}, \mathscr{F}$ as in \eqref{params} and \eqref{eq:FLscr}.
Let $s_0 = \eta \xi$ with $\xi \sim \text{Uniform}(B_{x,r}(0))$.  Then,
\begin{align*}
	\mathbb{P}\big[f(\textproc{TangentSpaceSteps}(x, s_0, \eta, b, \mathscr{T})) - f(x) \leq -\mathscr{F}/2\big] \geq 1 - \frac{\ell\sqrt{d}}{\sqrt{\rho\epsilon}}2^{10-\chi/2}.
\end{align*}
\end{lemma}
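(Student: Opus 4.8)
The plan is to push everything into the tangent space $\T_x\calM$, where the pullback $\hat{f}_x = f\circ\Retr_x$ is a genuine function on a Euclidean space, and then reuse Jin et al.'s escape-from-saddle analysis~\cite{Jin2019}, the one extra ingredient being the ball constraint hard-coded into $\textproc{TangentSpaceSteps}$. First I would record the reductions. By Lemma~\ref{lems} at $s=0$ (where $\D\Retr_x(0)=\Id$), $\hat{f}_x(0)=f(x)$ and $\nabla\hat{f}_x(0)=\grad f(x)$, so $\|\nabla\hat{f}_x(0)\|\le\epsilon$; and we are handed $\lambda_{\min}(\nabla^2\hat{f}_x(0))\le-\sqrt{\rho\epsilon}$ directly. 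On the ball $B_{x,b}(0)$, $\nabla\hat{f}_x$ is $\ell$-Lipschitz (Lemma~\ref{lem:Aell}) and, by Assumption~\ref{as:A3}, $\|\nabla^2\hat{f}_x(s)-\nabla^2\hat{f}_x(0)\|\le\rho\|s\|$. With $\eta=1/\ell$ and the parameters~\eqref{params}--\eqref{eq:FLscr}, this is exactly the situation Jin et al.\ analyze (their gradient-Lipschitz constant becoming $\ell$, their perturbation radius becoming $\eta r$). Since $f(\textproc{TangentSpaceSteps}(x,s_0,\eta,b,\mathscr{T}))-f(x)=\hat{f}_x(s_{\mathscr{T}})-\hat{f}_x(0)$, it suffices to show $\hat{f}_x(s_{\mathscr{T}})-\hat{f}_x(0)\le-\mathscr{F}/2$ with the stated probability.

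Next I would set down two deterministic facts about the iterates $\{s_j\}$ produced by $\textproc{TangentSpaceSteps}$. (i) The perturbation is tiny: $\|s_0\|=\eta\|\xi\|\le\eta r\ll b$, and a one-step quadratic bound from $\ell$-smoothness and $\|\nabla\hat{f}_x(0)\|\le\epsilon$ gives $\hat{f}_x(s_0)-\hat{f}_x(0)\le\epsilon\eta r+\tfrac{\ell}{2}\eta^2r^2=:\Delta$, which by~\eqref{params} satisfies $\Delta\le\mathscr{F}/4$. (ii) Since $\eta=1/\ell$ and $B_{x,b}(0)$ is convex, every full step $s_{j+1}=s_j-\eta\nabla\hat{f}_x(s_j)$ and the truncated step $s_{\mathscr{T}}=s_j-\alpha\eta\nabla\hat{f}_x(s_j)$ with $\alpha\in(0,1]$ fails to increase $\hat{f}_x$, and summing the per-step decreases (improve-or-localize bookkeeping, Lemma~\ref{lem:lemma_12}) yields $\|s_{\mathscr{T}}-s_0\|^2\le2\mathscr{T}\eta\big(\hat{f}_x(s_0)-\hat{f}_x(s_{\mathscr{T}})\big)$.

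Then I would split on whether $\textproc{TangentSpaceSteps}$ activates its truncation branch. If it does, then $\|s_{\mathscr{T}}\|=b$, hence $\|s_{\mathscr{T}}-s_0\|\ge b-\eta r$; feeding this into fact (ii), using $\mathscr{T}\eta=\chi/\sqrt{\rho\epsilon}$ and $b^2\ge\epsilon/\rho$ (i.e.\ $\epsilon\le b^2\rho$) and $\chi>1/4$, one gets $\hat{f}_x(s_0)-\hat{f}_x(s_{\mathscr{T}})\ge\mathscr{F}$, so $\hat{f}_x(s_{\mathscr{T}})-\hat{f}_x(0)\le\Delta-\mathscr{F}\le-\mathscr{F}/2$ --- \emph{deterministically}. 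If the truncation branch never fires, then $\{s_j\}_{j=0}^{\mathscr{T}}$ is precisely the unconstrained gradient-descent sequence on $\hat{f}_x$ from $s_0$, and I would invoke Jin et al.'s argument verbatim: letting $\mathcal{X}_{\mathrm{stuck}}\subseteq B_{x,\eta r}(0)$ be the set of perturbations for which $\mathscr{T}$ unconstrained steps fail to decrease $\hat{f}_x$ by $\mathscr{F}$, the coupling lemma (Lemma~\ref{lem:lemma_13}) shows two sequences started at points differing by enough along the minimal-eigenvector $e_1$ of $\nabla^2\hat{f}_x(0)$ cannot both stay localized within $\mathscr{L}$ of their starts, so at least one escapes; this bounds the $e_1$-width of $\mathcal{X}_{\mathrm{stuck}}$, and a standard volume estimate (a slab versus a ball, costing a $\sqrt{d}$ factor) gives $\mathbb{P}[s_0\in\mathcal{X}_{\mathrm{stuck}}]\le\frac{\ell\sqrt{d}}{\sqrt{\rho\epsilon}}2^{10-\chi/2}$. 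Since a failure ($\hat{f}_x(s_{\mathscr{T}})-\hat{f}_x(0)>-\mathscr{F}/2$) forces, by the truncation case, that truncation did not fire and hence $s_0\in\mathcal{X}_{\mathrm{stuck}}$, we conclude $\mathbb{P}[\text{failure}]\le\frac{\ell\sqrt{d}}{\sqrt{\rho\epsilon}}2^{10-\chi/2}$, which is the claim.

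The genuinely hard content is imported rather than new: it is Jin et al.'s stuck-region/coupling estimate (Lemmas~\ref{lem:lemma_12} and~\ref{lem:lemma_13}), whose Taylor expansions are all centered at the saddle point, so that Lipschitzness of the Hessian \emph{relative to the origin} is all that is used. The manifold-specific obstacle --- the point I would be most careful about --- is the ball constraint: one must verify both that leaving $B_{x,b}(0)$ is automatically a success via improve-or-localize, and that throughout the unconstrained argument all relevant trajectories remain inside $B_{x,b}(0)$, since only there is $\hat{f}_x$ known to have $\ell$-Lipschitz gradient and Hessian $\rho$-Lipschitz relative to the origin; this holds because the localization radius $\mathscr{L}$ and the perturbation radius $\eta r$ are both far below $b$ by~\eqref{params} together with $\epsilon\le b^2\rho$. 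Both verifications ultimately amount to checking the carefully balanced constants in~\eqref{params} against $\epsilon\le b^2\rho$ and the lower bound on $\chi$.
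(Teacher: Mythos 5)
Your proposal is correct and follows essentially the same route as the paper: define the stuck region, bound its width along $e_1$ via the coupling argument of Lemma~\ref{lem:lemma_13}, convert that to a probability with the slab-versus-ball volume estimate, and absorb the perturbation cost $\epsilon\eta r + \ell\eta^2 r^2/2 \leq \mathscr{F}/2$. The only (cosmetic) difference is that you handle the ball-exit case by a top-level case split showing truncation deterministically forces a decrease of $\mathscr{F}$, whereas the paper runs the very same improve-or-localize inequality $b \leq \sqrt{2\eta\mathscr{T}\mathscr{F}} + \eta r \leq \mathscr{L} < b$ as a contradiction inside Lemma~\ref{lem:lemma_13}.
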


\begin{proof}[Proof of Theorem \ref{thmMaster}]
This proof is similar to Jin et al.'s proof of Theorem 9 in~\cite{Jin2019}.

Recall that we set
\begin{align}
	T = 8\max\left\{\frac{\mathscr{T}}{3}, \frac{(f(x_0)-f^*)\mathscr{T}}{\mathscr{F}}, \frac{f(x_0)-f^*}{\eta \epsilon^2}\right\}.
	\label{eq:exactT}
\end{align}

PRGD performs two types of steps: (1) if $\norm{\grad f(x_t)} > \epsilon$, an RGD step on the manifold, and (2) if $\norm{\grad f(x_t)} \leq \epsilon$, a perturbation in the tangent space followed by GD steps in the tangent space.

There are at most $T/4$ iterates $x_t\in\calM$ satisfying $\norm{\grad f(x_t)} > \epsilon$ (i.e., iterates where an RGD step is performed), for otherwise Lemma~\ref{lem:manStep} and the definition of $T$~\eqref{eq:exactT} would imply $f(x_T) < f(x_0) - T\eta\epsilon^2/8 \leq f^*$, which contradicts Assumption~\ref{as:A1}.

The variable $t$ in Algorithm~\ref{PRGD} is an upper bound on the number of gradient queries issued so far.
For each RGD step on the manifold, $t$ increases by exactly 1.  PRGD does not terminate before $t$ exceeds $T$, and for every perturbation the counter increases by exactly $\mathscr{T}$.
Therefore, there are at least $3 T/(4\mathscr{T})$ iterates $x_t\in\calM$ satisfying $\norm{\grad f(x_t)} \leq \epsilon$. By the definition of $T$~\eqref{eq:exactT}, $3 T/(4\mathscr{T}) \geq 2$.

Suppose PRGD visits more than $T/(4 \mathscr{T})$ points $x_t\in \calM$ satisfying $\norm{\grad f(x_t)} \leq \epsilon$ and $\lambda_{\min}(\nabla^2 \hat{f}_{x_t}(0)) \leq -\sqrt{\rho\epsilon}$.  Each of these iterates $x_t$ is followed by a perturbation and at most $\mathscr{T}$ tangent space steps $\{s_j\}$.  For at least one such $x_t$, the sequence of tangent space steps does not escape the saddle point (that is, $f(x_{t+\mathscr{T}}) - f(x_t)> -\mathscr{F}/2$), for otherwise $f(x_T) < f(x_0) - T\mathscr{F}/(8\mathscr{T}) \leq f^*$ by the definition of $T$~\eqref{eq:exactT}.  Yet, by Lemma~\ref{lem:lemma_11} and a union bound, the probability that one or more of these sequences does not escape is at most $\delta$. Indeed, factoring out the third term in the max,
\begin{align*}
    T & = \frac{8\ell(f(x_0)-f^*)}{\epsilon^2} \max\bigg\{\frac{1}{3}\frac{\chi}{\sqrt{\rho\epsilon}}\frac{\epsilon^2}{(f(x_0)-f^*)}, 50\chi^4, 1\bigg\} \\
    & \leq \frac{8\ell(f(x_0)-f^*)}{\epsilon^2}\max\bigg\{\chi, 50\chi^4, 1\bigg\} = O \left(\frac{\ell(f(x_0) - f^*)}{\epsilon^2}\chi^4\right),
\end{align*}
where we used $\epsilon^{3/2} \leq 3\sqrt{\rho} \left( f(x_0) - f^* \right)$. Now using
\begin{align*}
	\max\left\{\chi, 50\chi^4, 1\right\} \leq 2^{18 + \chi/4}
\end{align*}
for all $\chi > 1/4$, and $\chi \geq 4 \log_2\left(2^{31} \frac{\ell^2\sqrt{d}(f(x_0)-f^*)}{\delta\sqrt{\rho}\epsilon^{5/2}}\right)$, we find
\begin{align*}
	T \cdot \frac{\ell\sqrt{d}}{\sqrt{\rho\epsilon}}2^{10-\chi/2} & \leq \frac{\ell^2\sqrt{d}}{\sqrt{\rho\epsilon}}\frac{(f(x_0)-f^*)}{\epsilon^2}2^{31-\chi/4} \leq \delta,
\end{align*}
as announced.

Hence, with probability at least $1-\delta$, PRGD visits at most $T/(4 \mathscr{T})$ points $x_t$ satisfying $\norm{\grad f(x_t)} \leq \epsilon$ and $\lambda_{\min}(\nabla^2 \hat{f}_{x_t}(0)) \leq -\sqrt{\rho\epsilon}$.  Using that there are at least $3 T/(4\mathscr{T})$ iterates $x_t\in\calM$ with $\norm{\grad f(x_t)}\leq\epsilon$, we conclude that at least two-thirds of the iterates $x_t\in\calM$ with $\norm{\grad f(x_t)}\leq\epsilon$ also satisfy $\lambda_{\min}(\nabla^2 \hat{f}_{x_t}(0)) \geq -\sqrt{\rho\epsilon}$, with probability at least $1-\delta$.
\end{proof}


Corollary \ref{thmMR} follows directly from Theorem \ref{thmMaster} and the following lemma.

\begin{lemma}
For some $\rho > 0$ (which would typically come from Assumption~\ref{as:A3}), under Assumption~\ref{as:A4} on the retraction,
let $x\in\calM$ satisfy $\norm{\grad f(x)}\leq \epsilon$ and $\lambda_{\min}(\nabla^2\hat{f}_x(0)) \geq -\sqrt{\rho\epsilon}$. Then,
$\lambda_{\min}(\Hess f(x)) \geq -\sqrt{\rho\epsilon} - \beta\epsilon$. In particular, if $\epsilon \leq \rho/\beta^2$, then $\lambda_{\min}(\Hess f(x)) \geq -\sqrt{4\rho\epsilon}$.
\end{lemma}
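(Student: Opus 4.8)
The plan is to read off the relationship between $\nabla^2 \hat f_x(0)$ and $\Hess f(x)$ from Lemma~\ref{lems}, specialized to $s = 0_x$. By property~2 of Definition~\ref{def:retr}, $T_{x,0} = \D\Retr_x(0_x) = \Id$, hence $T_{x,0}^* = \Id$ as well, and the Hessian formula in Lemma~\ref{lems} collapses to
\begin{align*}
	\nabla^2 \hat f_x(0) = \Hess f(\Retr_x(0_x)) + W_0 = \Hess f(x) + W_0,
\end{align*}
where $W_0$ is the symmetric operator on $\T_x\calM$ defined through polarization by $\inner{W_0[\dot s]}{\dot s} = \inner{\grad f(x)}{\gamma_{x,\dot s}''(0)}$, with $\gamma_{x,\dot s}(t) = \Retr_x(t\dot s)$. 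So the only discrepancy between the two Hessians is the ``retraction curvature'' term $W_0$, and it remains to show $W_0$ is small.

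Next I would bound $W_0$ in operator norm. Since $W_0$ is symmetric, $\opnormsmall{W_0} = \sup_{\norm{v}=1}|\inner{W_0[v]}{v}|$. For any unit $v \in \T_x\calM$, Cauchy--Schwarz gives $|\inner{W_0[v]}{v}| = |\inner{\grad f(x)}{\gamma_{x,v}''(0)}| \leq \norm{\grad f(x)}\,\norm{\gamma_{x,v}''(0)}$; by Assumption~\ref{as:A4} (applied at the unit vector $v$) we have $\norm{\gamma_{x,v}''(0)} \leq \beta$, and by hypothesis $\norm{\grad f(x)} \leq \epsilon$, so $\opnormsmall{W_0} \leq \beta\epsilon$. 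Then for every unit $v$,
\begin{align*}
	\inner{\Hess f(x)[v]}{v} = \inner{\nabla^2\hat f_x(0)[v]}{v} - \inner{W_0[v]}{v} \geq \lambda_{\min}(\nabla^2\hat f_x(0)) - \beta\epsilon \geq -\sqrt{\rho\epsilon} - \beta\epsilon,
\end{align*}
using the second hypothesis in the last step; taking the infimum over unit $v$ yields $\lambda_{\min}(\Hess f(x)) \geq -\sqrt{\rho\epsilon} - \beta\epsilon$. For the ``in particular'' clause, if $\epsilon \leq \rho/\beta^2$ then $\beta\sqrt\epsilon \leq \sqrt\rho$, so $\beta\epsilon \leq \sqrt{\rho\epsilon}$ and therefore $\lambda_{\min}(\Hess f(x)) \geq -2\sqrt{\rho\epsilon} = -\sqrt{4\rho\epsilon}$.

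I do not anticipate a real obstacle: the argument is essentially Cauchy--Schwarz plus Assumption~\ref{as:A4}. The one point worth stating carefully is why the second-order formula of Lemma~\ref{lems} simplifies at $s=0$ — namely that $T_{x,0}$ is the identity — so that $W_0$ captures the entire gap between $\nabla^2\hat f_x(0)$ and $\Hess f(x)$. As a sanity check, when $\beta = 0$ (a second-order retraction) we get $W_0 = 0$ and the two Hessians coincide exactly, which is precisely what is needed for the $\beta = 0$ case of Corollary~\ref{thmMR}.
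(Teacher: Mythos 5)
Your proof is correct and follows essentially the same route as the paper: specialize Lemma~\ref{lems} at $s=0$ where $T_{x,0}$ is the identity, bound $\|W_0\|\leq\beta\epsilon$ via Cauchy--Schwarz together with Assumption~\ref{as:A4}, and finish with $\beta\epsilon\leq\sqrt{\rho\epsilon}$ when $\epsilon\leq\rho/\beta^2$. No gaps.
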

\begin{proof}
Considering $s = 0$ in Lemma~\ref{lems}, we may use $\Retr_x(0) = x$ and that $T_{x, 0}$ is the identity (as per Definition~\ref{def:retr}) to get $\nabla^2\hat{f}_x(0) = \Hess f(x) + W_0$, where
\begin{align*}
	\forall \dot s \in \T_x\calM \textrm{ with } \norm{\dot s} = 1, \qquad \inner{W_0[\dot s]}{\dot s} & \leq \norm{\gamma_{x, \dot s}''(0)} \norm{\grad f(x)} \leq \beta\epsilon.
\end{align*}
Thus, $\norm{W_0} \leq \beta\epsilon$ and
we find $\lambda_{\min}(\Hess f(x)) \geq -\sqrt{\rho\epsilon} - \beta\epsilon$. 
For the last part, use $\beta \leq \sqrt{\rho/\epsilon}$.
\end{proof}

Corollary \ref{compactCor} follows directly from Corollary \ref{thmMR} and Lemma \ref{lem1}.

\section{Proofs of key lemmas}\label{next}
The goal of this section is to prove Lemmas \ref{lem:manStep} and \ref{lem:lemma_11}.  All proofs deal with linear spaces, not manifolds. The key ideas are due to Jin et al.~\cite{Jin2019}.
The following lemma is needed because to apply Jin et al.'s analysis we need the pullbacks not only to satisfy the restricted Lipschitz condition, Assumption~\ref{as:A2}, but also to have Lipschitz continuous gradient at least, uniformly in tangent space balls of fixed radius. The lemma below implies Lemma~\ref{lem:Aell}.
\begin{lemma}\label{lem:rad_a}
Let $f$ satisfy Assumptions \ref{as:A2} and \ref{as:A3}, and let $\ell = L + \rho b$. For all $x \in \calM$, it holds that $\nabla \hat{f}_x$ is $\ell$-Lipschitz continuous in the ball $B_{x,b}(0) \subset \T_x \calM$.
\end{lemma}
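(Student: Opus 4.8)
The plan is to route everything through the Hessian of the pullback and then integrate along line segments. Fix $x \in \calM$ and work entirely inside the fixed Euclidean space $\T_x\calM$, so no manifold machinery is needed; since $\hat f_x = f \circ \Retr_x$ is twice continuously differentiable, $\nabla^2 \hat f_x$ is well defined and continuous on a neighbourhood of $0_x$.

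\textbf{Step 1: bound $\|\nabla^2 \hat f_x(0)\|$ from Assumption~\ref{as:A2}.} Assumption~\ref{as:A2} is only a Lipschitz-type bound anchored at the origin, so to turn it into an operator-norm bound on the Hessian at $0$, I would differentiate at $0$: for any unit vector $\dot s \in \T_x\calM$ and any sufficiently small $t>0$, $t\dot s \in B_{x,b_1}(0)$, hence $\|\nabla\hat f_x(t\dot s) - \nabla\hat f_x(0)\| \le Lt$; dividing by $t$ and letting $t \downarrow 0$ gives $\|\nabla^2\hat f_x(0)[\dot s]\| \le L$, and a supremum over unit $\dot s$ yields $\|\nabla^2\hat f_x(0)\| \le L$.

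\textbf{Step 2: bound $\|\nabla^2 \hat f_x(s)\|$ on the whole ball from Assumption~\ref{as:A3}.} Since $b = \min\{b_1, b_2\} \le b_2$, every $s \in B_{x,b}(0)$ lies in the ball where Assumption~\ref{as:A3} applies, so by the triangle inequality $\|\nabla^2\hat f_x(s)\| \le \|\nabla^2\hat f_x(0)\| + \rho\|s\| \le L + \rho b = \ell$, uniformly in $x$. \textbf{Step 3: upgrade to Lipschitz continuity of $\nabla \hat f_x$.} The ball $B_{x,b}(0)$ is convex, so for $s_1, s_2 \in B_{x,b}(0)$ the segment $s(t) = s_2 + t(s_1 - s_2)$, $t\in[0,1]$, stays inside it, and by the fundamental theorem of calculus
\begin{align*}
\nabla\hat f_x(s_1) - \nabla\hat f_x(s_2) = \int_0^1 \nabla^2\hat f_x(s(t))[s_1 - s_2]\,\mathrm{d}t,
\end{align*}
whence $\|\nabla\hat f_x(s_1) - \nabla\hat f_x(s_2)\| \le \big(\sup_{t\in[0,1]} \|\nabla^2\hat f_x(s(t))\|\big)\,\|s_1 - s_2\| \le \ell\,\|s_1 - s_2\|$ by Step 2. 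This proves Lemma~\ref{lem:rad_a}, and Lemma~\ref{lem:Aell} follows at once since $\ell = L + \rho b \in [L, L+\rho b]$.

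The only subtle point is Step 1: Assumption~\ref{as:A2} must be differentiated at the origin to become a genuine operator-norm bound on $\nabla^2\hat f_x(0)$, precisely because it is stated as an origin-anchored Lipschitz-type inequality rather than as full Lipschitz continuity of the gradient. Everything after that is routine calculus on a convex subset of a Euclidean space, and the uniformity in $x$ is inherited directly from the uniformity already present in Assumptions~\ref{as:A2} and~\ref{as:A3}.
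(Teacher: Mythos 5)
Your proof is correct and follows essentially the same route as the paper's: bound $\|\nabla^2\hat f_x(0)\|\le L$ from Assumption~\ref{as:A2}, extend to $\|\nabla^2\hat f_x(s)\|\le L+\rho b=\ell$ on the ball via Assumption~\ref{as:A3} and the triangle inequality, then integrate the Hessian along the segment. The only difference is that you spell out the limiting argument behind $\|\nabla^2\hat f_x(0)\|\le L$, which the paper asserts without detail; that is a welcome clarification, not a deviation.
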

\begin{proof} 
By Assumption~\ref{as:A2}, $\norm{\nabla^2\hat{f}_x(0)} \leq L$.  Hence, by Assumption~\ref{as:A3}, for all $s \in B_{x,b}(0)$,
\begin{align*}
	\norm{\nabla^2\hat{f}_x(s)} \leq \norm{\nabla^2\hat{f}_x(0)} + \norm{\nabla^2\hat{f}_x(s) - \nabla^2\hat{f}_x(0)} \leq L + \rho \norm{s} \leq L + \rho b = \ell.
\end{align*}
Let $s_1, s_2 \in B_{x,b}(0)$ be arbitrary.  Then indeed,
\begin{equation}\label{usefuleqn}
\norm{\nabla \hat{f}_x(s_2) - \nabla \hat{f}_x(s_1)} = \norm{\int_0^1 \nabla^2 \hat{f}_x(s_1 + (s_2-s_1)\tau)[s_2-s_1] d\tau} \leq \ell \norm{s_2-s_1},
\end{equation}
where we used that
the line segment from $s_1$ to $s_2$ is contained in $B_{x,b}(0)$.
\end{proof}

Together with the one above, the following standard lemma allows us to establish the sufficient decrease of $\hat{f}_x$ in $B_{x,b}(0)$ upon taking a gradient step in the tangent space.

\begin{lemma}\label{lem:lemma_10'}
%
Let $\nabla \hat{f}_x$ be $\ell$-Lipschitz continuous along the line segment connecting $s_j$ to $s_{j+1}$,
related by $s_{j+1} = s_j - \alpha \eta \nabla \hat{f}_{x}(s_j)$ with $\eta = 1/\ell$ and $\alpha \in [0, 1]$. Then,
\begin{align*}
	\hat{f}_{x}(s_{j+1}) - \hat{f}_{x}(s_{j}) \leq - \frac{\alpha\eta}{2}\sqnorm{\nabla \hat{f}_{x}(s_j)}.
\end{align*}
\end{lemma}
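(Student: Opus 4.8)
The plan is to run the classical \emph{descent lemma} argument, being careful that we only have Lipschitz continuity of $\nabla\hat f_x$ \emph{along the segment} joining $s_j$ to $s_{j+1}$ (which is all the argument ever uses). Since $f$ is twice continuously differentiable and $\Retr_x$ is smooth, $\hat f_x = f\circ\Retr_x$ is differentiable, so the fundamental theorem of calculus applied to $\tau\mapsto\hat f_x\big(s_j+\tau(s_{j+1}-s_j)\big)$ on $[0,1]$ gives
\[
\hat f_x(s_{j+1}) - \hat f_x(s_j) = \int_0^1 \inner{\nabla\hat f_x\big(s_j+\tau(s_{j+1}-s_j)\big)}{s_{j+1}-s_j}\,d\tau.
\]
I would then add and subtract $\nabla\hat f_x(s_j)$ inside the inner product, splitting the right-hand side into a ``main term'' $\inner{\nabla\hat f_x(s_j)}{s_{j+1}-s_j}$ and an ``error term'' $\int_0^1 \inner{\nabla\hat f_x(s_j+\tau(s_{j+1}-s_j))-\nabla\hat f_x(s_j)}{s_{j+1}-s_j}\,d\tau$.

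For the main term, substitute $s_{j+1}-s_j = -\alpha\eta\,\nabla\hat f_x(s_j)$ to get exactly $-\alpha\eta\,\sqnorm{\nabla\hat f_x(s_j)}$. For the error term, apply Cauchy--Schwarz and then the $\ell$-Lipschitz bound along the segment to dominate the integrand by $\ell\tau\,\sqnorm{s_{j+1}-s_j}$; integrating in $\tau$ yields $\tfrac{\ell}{2}\sqnorm{s_{j+1}-s_j} = \tfrac{\ell}{2}\alpha^2\eta^2\,\sqnorm{\nabla\hat f_x(s_j)}$. Combining the two pieces gives
\[
\hat f_x(s_{j+1}) - \hat f_x(s_j) \leq -\alpha\eta\Big(1-\tfrac{\ell\alpha\eta}{2}\Big)\sqnorm{\nabla\hat f_x(s_j)},
\]
and the proof closes by observing that $\eta = 1/\ell$ and $\alpha\in[0,1]$ force $\tfrac{\ell\alpha\eta}{2} = \tfrac{\alpha}{2}\leq\tfrac12$, so the bracket is at least $\tfrac12$.

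There is no real obstacle here: this is a textbook estimate, and the statement is deliberately phrased so that only Lipschitzness on the relevant segment is needed -- precisely the regularity the integral representation uses. The only thing to keep straight is the bookkeeping with $\alpha$ (to allow the truncated step produced by \textproc{TangentSpaceSteps} when an iterate would leave $B_{x,b}(0)$), and the fact that in the intended applications the segment from $s_j$ to $s_{j+1}$ lies in $B_{x,b}(0)$, where Lemma~\ref{lem:rad_a} furnishes the constant $\ell$; combined with $\eta=1/\ell$ this is exactly the regime in which the bracket above stays bounded below by $1/2$.
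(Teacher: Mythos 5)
Your argument is correct and is essentially the paper's proof: the paper invokes the standard descent inequality $\hat f_x(s_{j+1}) \leq \hat f_x(s_j) + \innersmall{\nabla \hat f_x(s_j)}{s_{j+1}-s_j} + \tfrac{\ell}{2}\sqnorm{s_{j+1}-s_j}$ as a known consequence of Lipschitz continuity along the segment, then substitutes the step and bounds the coefficient exactly as you do. You merely unpack the derivation of that inequality via the fundamental theorem of calculus, which is a fine (and equivalent) level of detail.
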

\begin{proof} 
	It is a standard consequence of Lipschitz continuity of $\nabla \hat f_x$ along the line segment $\tau \mapsto (1-\tau)s_j + \tau s_{j+1}$ for $\tau \in [0, 1]$ that
	\begin{align*}
		\hat f_x(s_{j+1}) \leq \hat f_x(s_j) + \inner{\nabla \hat f_x(s_j)}{s_{j+1} - s_j} + \frac{\ell}{2} \norm{s_{j+1} - s_j}^2.
	\end{align*}
	Plugging in $s_{j+1} - s_j = - \alpha \eta \nabla \hat{f}_{x}(s_j)$, we get
	\begin{align*}
		\hat f_x(s_{j+1}) \leq \hat f_x(s_j) + \left[ - \alpha\eta + \frac{\ell \alpha^2 \eta^2}{2} \right] \norm{\nabla \hat f_x(s_j)}^2.
	\end{align*}
	The coefficient between brackets is further equal to $\left(-1 + \frac{\alpha}{2}\right)\alpha\eta$, which is at most $-\alpha\eta/2$.
\end{proof}

We are now ready to prove Lemma \ref{lem:manStep}.
\begin{proof}[Proof of Lemma \ref{lem:manStep}]
	The call to $\textproc{TangentSpaceSteps}(x, 0, \eta, b, 1)$ produces a point $\Retr_x(s_1)$, with $s_1 = s_0 - \alpha\eta\nabla\hat f_x(s_0)$, where $s_0 = 0$, $\alpha \in [0, 1]$ and $\norm{s_1} \leq b$. Owing to Assumption~\ref{as:A2}, we know that $\nabla \hat f_x$ is $L$-Lipschitz continuous along the line segment connecting $s_0$ to $s_1$. Since $\ell \geq L$, it is a fortiori $\ell$-Lipschitz continuous along that line segment: Lemma~\ref{lem:lemma_10'} applies and yields
	\begin{align*}
		f(\Retr_x(s_1)) = \hat{f}_{x}(s_{1}) & \leq \hat{f}_{x}(s_{0}) - \frac{\alpha\eta}{2}\sqnorm{\nabla \hat{f}_{x}(s_0)} = f(x) - \frac{\alpha \eta}{2} \norm{\nabla \hat f_x(0)}^2.
	\end{align*}
	If $\alpha = 1$, since $\norm{\nabla \hat f_x(0)} = \norm{\grad f(x)} > \epsilon$, we are done. Owing to how $\textproc{TangentSpaceSteps}$ works, if $\alpha < 1$, then it must be that $\|\alpha \eta \nabla \hat f_x(0)\| = b$, so that the inequality above yields
	\begin{align*}
		f(\Retr_x(s_1))  & \leq f(x) - \frac{b}{2}\norm{\nabla \hat f_x(0)} \leq f(x) - \frac{b\epsilon}{2}.
	\end{align*}
	Using $\epsilon \leq b^2\rho$ and $\ell \geq L\geq \sqrt{\rho\epsilon}$,
	\begin{align*}
		\eta \epsilon = \frac{\epsilon}{\ell} \leq \frac{\sqrt{b^2\rho\epsilon}}{\ell} = \frac{\sqrt{\rho\epsilon}}{\ell}b \leq b.
	\end{align*}		
	Hence, $f(\Retr_x(s_1)) \leq f(x) - \eta\epsilon^2/2$, as desired.
%
	(As a side note: Assumption~\ref{as:A3} is not truly necessary here; it is only convenient so that we can use the same definitions of $\rho, b$ and $\ell$ as in other parts of the paper.)
%
\end{proof}

Lemma \ref{lem:lemma_12} is Jin et al.'s ``improve or localize lemma''~\cite{Jin2019}, with a tweak for variable step sizes.  The lemma states that if the function value does not decrease much, then the iterates are localized.

\begin{lemma}\label{lem:lemma_12}
Fix $j \geq 0$, $x \in \calM$ and $s_0 \in \T_{x} \calM$.  For all $0 \leq i \leq j-1$, assume $0 \leq \eta_i \leq \eta = 1/\ell$, $s_{i+1} = s_i - \eta_i \nabla \hat{f}_{x}(s_i)$ and $\nabla\hat{f}_x$ is $\ell$-Lipschitz continuous along the line segment connecting $s_i$ to $s_{i+1}$. Then,
$$\norm{s_{j} - s_0} \leq \sqrt{2\eta j\big(\hat{f}_{x}(s_0) - \hat{f}_{x}(s_j)\big)}.$$
\end{lemma}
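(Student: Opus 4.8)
The plan is to combine a per-step sufficient-decrease inequality with the triangle inequality and Cauchy--Schwarz, a completely standard \emph{improve or localize} argument; the variable step sizes are absorbed into the $\alpha$ parameter already present in Lemma~\ref{lem:lemma_10'}.

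First I would rewrite each update as $s_{i+1} = s_i - \alpha_i \eta \nabla\hat f_x(s_i)$ with $\eta = 1/\ell$ and $\alpha_i = \eta_i\ell \in [0,1]$ (this is legitimate since $0 \leq \eta_i \leq 1/\ell$). Because $\nabla\hat f_x$ is $\ell$-Lipschitz continuous along the segment joining $s_i$ to $s_{i+1}$, Lemma~\ref{lem:lemma_10'} applies on that segment and yields
\begin{align*}
	\hat f_x(s_i) - \hat f_x(s_{i+1}) \geq \frac{\alpha_i\eta}{2}\sqnorm{\nabla\hat f_x(s_i)} = \frac{\eta_i}{2}\sqnorm{\nabla\hat f_x(s_i)}.
\end{align*}
Next I would bound the length of a single step using $\eta_i \leq \eta$:
\begin{align*}
	\sqnorm{s_{i+1} - s_i} = \eta_i^2 \sqnorm{\nabla\hat f_x(s_i)} \leq \eta\,\eta_i \sqnorm{\nabla\hat f_x(s_i)} \leq 2\eta\big(\hat f_x(s_i) - \hat f_x(s_{i+1})\big).
\end{align*}

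Finally I would sum over $i = 0, \ldots, j-1$, applying the triangle inequality and then Cauchy--Schwarz, and telescoping the function-value differences:
\begin{align*}
	\norm{s_j - s_0} \leq \sum_{i=0}^{j-1}\norm{s_{i+1} - s_i} \leq \sqrt{j \sum_{i=0}^{j-1}\sqnorm{s_{i+1}-s_i}} \leq \sqrt{2\eta j \sum_{i=0}^{j-1}\big(\hat f_x(s_i) - \hat f_x(s_{i+1})\big)} = \sqrt{2\eta j\big(\hat f_x(s_0) - \hat f_x(s_j)\big)}.
\end{align*}
There is no real obstacle here: the only point requiring a little care is that the hypothesis supplies $\ell$-Lipschitzness of $\nabla\hat f_x$ only along each \emph{consecutive} segment (not on a whole ball), which is precisely the form in which Lemma~\ref{lem:lemma_10'} is stated, so it plugs in directly; and the step sizes $\eta_i$ need not be constant, which is handled by passing through $\alpha_i = \eta_i\ell$.
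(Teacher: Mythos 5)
Your proof is correct and follows essentially the same route as the paper's: a telescoping sum, the triangle inequality, Cauchy--Schwarz, and the per-step sufficient decrease from Lemma~\ref{lem:lemma_10'} with $\alpha_i = \eta_i \ell$. The only cosmetic difference is that you apply Cauchy--Schwarz with uniform weights after absorbing $\eta_i^2 \leq \eta\,\eta_i$ into each step length, whereas the paper applies it with weights $\sqrt{\eta_i}$ and then bounds $\sum_i \eta_i \leq j\eta$; both yield the identical bound.
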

\begin{proof} 
Using a telescoping sum, triangle inequality, Cauchy--Schwarz and (to get to the last line) Lemma \ref{lem:lemma_10'}, we get:
\begin{align*}
	\norm{s_{j} - s_0} & = \norm{\sum_{i = 0}^{j-1} s_{i+1}-s_{i}} = \norm{\sum_{i = 0}^{j-1} - \eta_{i} \nabla \hat{f}_{x}(s_{i})} \leq \sum_{i = 0}^{j-1} \sqrt{\eta_{i}}\norm{\sqrt{\eta_{i}} \nabla \hat{f}_{x}(s_{i})} \\ 
					  & \leq \sqrt{\bigg(\sum_{i = 0}^{j-1} \eta_i \sqnorm{\nabla \hat{f}_{x}(s_{i})}\bigg)\bigg(\sum_{i = 0}^{j-1} \eta_{i}\bigg)} \leq \sqrt{2\eta j\bigg(\sum_{i = 0}^{j-1} \frac{\eta_{i}}{2} \sqnorm{\nabla\hat{f}_{x}(s_{i})}\bigg)} \\
					  & \leq \sqrt{2 \eta j \bigg(\sum_{i = 0}^{j-1} \hat{f}_{x}(s_{i}) - \hat{f}_{x}(s_{i+1})\bigg)} = \sqrt{2 \eta j \big(\hat{f}_{x}(s_{0}) - \hat{f}_{x}(s_{j})\big)}. \qedhere
%
\end{align*}
\end{proof}

Lemma \ref{lem:lemma_13} below and its proof are very similar to Jin et al.'s Lemma 13 and its proof~\cite{Jin2019}, except for a modification since $\nabla\hat{f}_x$ is only Lipschitz continuous in a ball. This deterministic lemma formalizes the coupling sequence argument: if the Hessian of the pullback has a negative eigenvalue which is large in magnitude, upon initializing the tangent space steps at two appropriately chosen points $s_0^{}, s_0'$, with certainty, one of them leads to significant decrease in the cost function.
As usual, we use parameters $\eta, r, \mathscr{T}$ as in~\eqref{params} and $\mathscr{F}, \mathscr{L}$ as in~\eqref{eq:FLscr}.
\begin{lemma}\label{lem:lemma_13}
Under Assumptions~\ref{as:A2} and~\ref{as:A3}, let $x \in \calM$ be such that $\lambda_{\min}(\nabla^2 \hat{f}_x(0)) \leq - \sqrt{\rho\epsilon}$, with $\epsilon \leq b^2\rho$ and $L \geq \sqrt{\rho\epsilon}$.
Let $s_0^{}, s_0' \in \T_{x}\calM$ be such that
\begin{enumerate}
\item $\norm{s_0}, \norm{s_0'} \leq \eta r$, and
\item $s_0^{} - s_0' = \eta r_0 e_1$, where $e_1$ is an eigenvector of unit norm associated with the minimum eigenvalue of $\nabla^2 \hat{f}_x(0)$, and $r_0 > \omega = 2^{2-\chi}\ell \mathscr{L}$.
\end{enumerate}
Let $s_{\mathscr{T}}$ be defined by running $\textproc{TangentSpaceSteps}(x, s_0, \eta, b, \mathscr{T})$ (see Algorithm~\ref{PRGD}).   Let $s_{\mathscr{T}}'$ be similarly defined by running $\textproc{TangentSpaceSteps}(x, s_0', \eta, b, \mathscr{T})$.  Then,
$$\min\left\{\hat{f}_{x}(s_{\mathscr{T}})-\hat{f}_{x}(s_{0}), \hat{f}_{x}(s_{\mathscr{T}}') - \hat{f}_{x}(s_{0}')\right\} \leq - \mathscr{F}.$$
\end{lemma}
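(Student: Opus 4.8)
## Proof Proposal for Lemma~\ref{lem:lemma_13}

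\textbf{Overall strategy.} The plan is to argue by contradiction: assume that \emph{both} sequences fail to decrease the function much, i.e., $\hat f_x(s_{\mathscr T}) - \hat f_x(s_0) > -\mathscr F$ and $\hat f_x(s_{\mathscr T}') - \hat f_x(s_0') > -\mathscr F$, and derive a contradiction from the fact that the coupled difference sequence $w_j = s_j - s_j'$ must grow geometrically along the direction $e_1$ of most-negative curvature. This is exactly Jin et al.'s coupling argument, and the only genuinely new ingredient is bookkeeping around the fact that $\nabla \hat f_x$ is only guaranteed $\ell$-Lipschitz inside the ball $B_{x,b}(0)$, together with the possibility that \textproc{TangentSpaceSteps} takes a shortened final step (the $\alpha \in (0,1]$ step) when an iterate would leave the ball.

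\textbf{Step 1: Localization of both sequences.} First I would invoke Lemma~\ref{lem:lemma_12} (improve-or-localize) on each sequence. Under the contradiction hypothesis, for every $j \le \mathscr T$ we get $\norm{s_j - s_0} \le \sqrt{2\eta \mathscr T \mathscr F}$, and similarly for the primed sequence. Plugging in the parameter values from~\eqref{params} and~\eqref{eq:FLscr}, this radius is $O(\mathscr L / \chi)$ (small compared to $b$ once $\epsilon \le b^2\rho$), so in particular all iterates stay well inside $B_{x,b}(0)$; hence \textproc{TangentSpaceSteps} never triggers its break condition, every step is a full gradient step $s_{j+1} = s_j - \eta \nabla\hat f_x(s_j)$, and $\nabla\hat f_x$ is genuinely $\ell$-Lipschitz along all the line segments used. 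This reduces the situation to the Euclidean setting of~\cite{Jin2019} applied to the function $\hat f_x$ restricted to $B_{x,b}(0)$.

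\textbf{Step 2: Coupling recursion for $w_j = s_j - s_j'$.} Writing $\mathcal H = \nabla^2\hat f_x(0)$ and using Assumption~\ref{as:A3} to control the Hessian along the segment $[0,s]$, one obtains the standard recursion
\begin{align*}
	w_{j+1} = (I - \eta \mathcal H) w_j - \eta \Delta_j w_j,
\end{align*}
where $\Delta_j = \int_0^1 \big(\nabla^2\hat f_x(s_j' + \theta w_j) - \mathcal H\big)\, d\theta$ satisfies $\norm{\Delta_j} \le \rho \max\{\norm{s_j}, \norm{s_j'}\} + O(\rho\norm{w_j})$, which is $O(\sqrt{\rho\epsilon}/\chi)$ by Step 1. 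Since $w_0 = \eta r_0 e_1$ lies exactly along the top eigendirection and $\eta\lambda_{\min}(\mathcal H) \le -\eta\sqrt{\rho\epsilon} = -\sqrt{\rho\epsilon}/\ell$, the factor $(I - \eta\mathcal H)$ amplifies the $e_1$-component by $1 + \sqrt{\rho\epsilon}/\ell$ each step, while the perturbation $\Delta_j$ is too small to cancel this. An induction (identical to~\cite{Jin2019}) shows $\norm{w_j}$ grows like $(1 + \sqrt{\rho\epsilon}/\ell)^j \eta r_0 / 2$, so by step $j = \mathscr T = \ell\chi/\sqrt{\rho\epsilon}$ it reaches roughly $\eta r_0 \cdot 2^{\chi}/2 \cdot (\text{const})$. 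Because $r_0 > \omega = 2^{2-\chi}\ell\mathscr L$, this forces $\norm{w_{\mathscr T}} \gtrsim \eta \ell \mathscr L = \mathscr L$, contradicting the localization bound of Step 1 (which gives $\norm{w_j} \le \norm{s_j - s_0} + \norm{s_0 - s_0'} + \norm{s_0' - s_j'} = O(\mathscr L/\chi)$, strictly smaller once $\chi$ is large enough). This contradiction proves the claim.

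\textbf{Main obstacle.} The routine parts are the telescoping/Cauchy–Schwarz estimates; the delicate part is the interplay between the ball constraint and the coupling argument. Specifically, one must be careful that the contradiction hypothesis (both decrease by less than $\mathscr F$) is what \emph{guarantees} the iterates stay in $B_{x,b}(0)$ — if an iterate left the ball, \textproc{TangentSpaceSteps} would have returned a boundary point after a shortened step, and one would need a separate (easy) argument that reaching the boundary already implies a large function decrease via Lemma~\ref{lem:lemma_10'}, contradicting the hypothesis directly. Threading this logic so that the coupling recursion is only ever applied to honest full-step sequences inside the ball is the one place where the manifold version genuinely departs from~\cite{Jin2019}, and it is where I would spend the most care; the growth-versus-localization arithmetic is then a verbatim transcription of Jin et al.'s constants.
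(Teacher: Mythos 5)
Your proposal matches the paper's proof essentially step for step: argue by contradiction, use improve-or-localize (Lemma~\ref{lem:lemma_12}) both to rule out any iterate escaping $B_{x,b}(0)$ (so every step is a full gradient step with $\ell$-Lipschitz gradients along the relevant segments) and to localize both sequences within radius $\mathscr{L}$ of the origin, then run Jin et al.'s coupling recursion and $p$--$q$ induction to force $\max\{\norm{s_{\mathscr{T}}},\norm{s_{\mathscr{T}}'}\} > \mathscr{L}$, contradicting the localization. One minor quantitative correction: the localization radius $\sqrt{2\eta\mathscr{T}\mathscr{F}} + \eta r$ is of the same order as $\mathscr{L}$ (the constants are tuned so that it is exactly $\leq \mathscr{L}$), not $O(\mathscr{L}/\chi)$, so the final growth-versus-localization comparison has less slack than your sketch suggests and does require the precise constants you defer to.
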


\begin{proof} 
First, note that both sequences are initialized in the interior of the ball of radius $b$. Indeed, using $\ell \geq L, L \geq \sqrt{\rho \epsilon}$, $\epsilon \leq b^2\rho$ and $\chi > 1/4$,
\begin{align}
\eta r = \frac{1}{\ell}\frac{\epsilon}{400 \chi^3} < \frac{\epsilon}{L}\frac{64}{400} = b \sqrt{\frac{\rho\epsilon}{L^2}\frac{\epsilon}{b^2\rho}}\frac{64}{100} \leq b\frac{64}{100} < b.
\label{eq:etarlessb}
\end{align}
	
The proof is by contradiction: assume
\begin{align*}
	\min\left\{\hat{f}_{x}(s_{\mathscr{T}})-\hat{f}_{x}(s_{0}), \hat{f}_{x}(s_{\mathscr{T}}') - \hat{f}_{x}(s_{0}')\right\} > - \mathscr{F}.
\end{align*}
Further assume, for the sake of contradiction, that one of the sequences $\{s_j\}_{j \leq \mathscr{T}}, \{s_j'\}_{j \leq \mathscr{T}}$ (defined in $\textproc{TangentSpaceSteps}$) escapes the interior of the ball $B_{x,b}(0)$.  
Without loss of generality, assume $\{s_j\}_{j\leq\mathscr{T}}$ escapes.  Let $j \leq \mathscr{T}-1$ be the minimum integer for which $\norm{s_{j+1}} \geq b$.  Then, $\textproc{TangentSpaceSteps}(x, s_0, \eta, b, \mathscr{T})$ terminates with $s_j - \alpha \eta \nabla \hat{f}_{x}(s_j)$ for some $\alpha \in (0,1]$ satisfying $b = \norm{s_j - \alpha \eta \nabla \hat{f}_{x}(s_j)}$.  Using Lemma \ref{lem:lemma_12}, $\ell \geq L \geq \sqrt{\rho\epsilon}$ and  $\chi > \frac{1}{4}$,
\begin{align*}
b & = \norm{s_j - \alpha \eta \nabla \hat{f}_{x}(s_j)} \leq \norm{s_j - \alpha \eta \nabla \hat{f}_{x}(s_j)-s_0}+\norm{s_0} \leq \sqrt{2\eta(j+1)\mathscr{F}} + \eta r \\ &\leq \sqrt{2\eta\mathscr{T}\mathscr{F}} + \eta r \leq \sqrt{\frac{\epsilon}{25\chi^2\rho}} + \frac{1}{400 \chi^3}\sqrt{\frac{\epsilon}{\rho}} \leq \bigg(\frac{1}{5\chi} + \frac{1}{400 \chi^3} \bigg)\sqrt{\frac{\epsilon}{\rho}} \leq  \frac{1}{4\chi}\sqrt{\frac{\epsilon}{\rho}} = \mathscr{L}.
\end{align*}
Since $\epsilon \leq b^2\rho$, we know that $\mathscr{L} < b$, which shows a contradiction. Thus, neither of the sequences $\{s_j\}_{j \leq \mathscr{T}}, \{s_j'\}_{j \leq \mathscr{T}}$ leave the interior of $B_{x,b}(0)$.  That is, $s_{j+1} = s_j - \eta \nabla \hat{f}_{x}(s_j)$ and $\norm{s_{j+1}} < b$ for $j = 0, 1, \ldots, \mathscr{T}-1$, and similarly for $\{s_j'\}_{j \leq \mathscr{T}}$.

From here, we proceed exactly as in Lemma 13 of~\cite{Jin2019}.  By Lemma \ref{lem:lemma_12}, for all $j \leq \mathscr{T}$,
\begin{equation}\label{that}
\max\left\{\norm{s_j}, \norm{s_j'}\right\} \leq \max\left\{\norm{s_j-s_0}, \norm{s_j'-s_0'}\right\} + \eta r \leq \sqrt{2\eta\mathscr{T}\mathscr{F}} + \eta r \leq \mathscr{L}.
\end{equation}
Let $\hat{s}_j = s_j - s_j'$ and $\mathcal{H} = \nabla^2 \hat{f}_{x}(0)$.  Then,
\begin{align*}
\hat{s}_{j+1} & = \hat{s}_j - \left(\eta \nabla \hat{f}_{x}(s_j) - \eta \nabla \hat{f}_{x}(s_j')\right) = \hat{s}_j - \eta \int_0^1 {\nabla^2 \hat{f}_{x}\left(s_j' + \theta(s_j - s_j')\right) [s_j-s_j']} d\theta \\ & = (I- \eta \mathcal{H})\hat{s}_j - \eta \Delta_j \hat{s}_j,
\end{align*}
where $\Delta_j = \int_0^1 {\left(\nabla^2 \hat{f}_{x}\left(s_j' + \theta(s_j - s_j')\right) - \mathcal{H}\right)} d\theta$.  By Assumption \ref{as:A3},
\begin{align*}
\norm{\Delta_j} \leq \int_0^1 \rho\norm{s_j' + \theta(s_j - s_j')} d\theta \leq \int_0^1 \rho \max\{\norm{s_j}, \norm{s_j'}\}d\theta \leq \rho \mathscr{L}.
\end{align*}
This will be useful momentarily. It is easy to check by induction that
\begin{align*}
	\hat{s}_{j+1} & = p(j+1) - q(j+1),
\end{align*}
where $p(0) = \hat s_0, q(0) = 0$, and
\begin{align*}
	p(j+1) & = (I - \eta \mathcal{H})^{j+1}\hat{s}_0, & & \text{ and } & q(j+1) & = \eta \sum_{i=0}^j(I-\eta \mathcal{H})^{j-i}\Delta_{i}\hat{s}_{i}.
\end{align*}

We use induction to show that $\norm{q(j)} \leq \norm{p(j)}/2$.  The claim is clearly true for $j=0$.  Suppose the claim is true for all $i \leq j$.  We prove the claim for $j+1$.  Let $-\gamma = \lambda_{\min}(\nabla^2 \hat{f}_x(0))$. Using $\hat{s}_0 = \eta r_0 e_1$, notice in particular that
\begin{align*}
	p(j) & = (I - \eta \mathcal{H})^j \eta r_0 e_1 = (1+\eta \gamma)^j \eta r_0 e_1,
\end{align*}
so that the norm of $p(j)$ grows with $j$: $\norm{p(j)} = (1+\eta \gamma)^j \eta r_0$.
Using the induction hypothesis, for all $i \leq j$ we have:
$$\norm{\hat s_i} \leq \norm{p(i)} + \norm{q(i)} \leq \frac{3}{2}\norm{p(i)} \leq 2(1+\eta \gamma)^i \eta r_0.$$
Furthermore, since $\mathcal{H} \preceq L I \preceq \ell I$, it follows that $I - \eta\mathcal{H} \succeq 0$. As a result, $\norm{I - \eta\mathcal{H}} = \lambda_{\max}(I - \eta\mathcal{H}) = 1+\eta\gamma$.
Therefore, also using $2\eta\rho\mathscr{L}\mathscr{T} = 1/2$ in the last step,
\begin{align*}
    \norm{q(j+1)} &= \norm{\eta \sum_{i=0}^j(I-\eta\mathcal{H})^{j-i}\Delta_i\hat{s}_i} \leq \eta \rho\mathscr{L}\sum_{i=0}^j\norm{(I-\eta\mathcal{H})^{j-i}}\norm{\hat{s}_i} \\ & \leq 2\eta\rho\mathscr{L}\sum_{i=0}^j(1+\eta\gamma)^{j-i}(1+\eta\gamma)^i\eta r_0  \\ & \leq 2\eta\rho\mathscr{L}\mathscr{T}(1+\eta\gamma)^j\eta r_0 = 2\eta\rho\mathscr{L}\mathscr{T}\norm{p(j)} \leq \norm{p(j+1)}/2,
\end{align*}
So we have proven $\norm{q(j)} \leq \norm{p(j)}/2$ for all $j$.  Therefore, using the definition of $r_0$ in the last step,
\begin{align*}
    \max\{\norm{s_{\mathscr{T}}},\norm{s'_{\mathscr{T}}}\} & \geq (\norm{s_{\mathscr{T}}}+\norm{s_{\mathscr{T}}'})/2 \geq \norm{\hat{s}_{\mathscr{T}}}/2 \geq (\norm{p(\mathscr{T})} - \norm{q(\mathscr{T})})/2 \\ &\geq \norm{p(\mathscr{T})}/4 = (1+\eta\gamma)^{\mathscr{T}}\eta r_0 / 4 \geq 2^{\chi-2}\eta r_0 > \mathscr{L},
\end{align*}
which contradicts~\eqref{that}. In the second to last step, we used $\gamma \geq \sqrt{\rho \epsilon}$ and $\sqrt{\rho \epsilon} \leq \ell$ so that
\begin{align*}
	\frac{1}{\chi}\log_2\left((1+\eta\gamma)^{\mathscr{T}} \right) \geq \frac{\mathscr{T}}{\chi} \log_2\left(1 + \frac{\sqrt{\rho \epsilon}}{\ell} \right) =\frac{\mathscr{T}}{\chi} \log_2\left(1 + \frac{\chi}{\mathscr{T}} \right) \geq 1,
\end{align*}
since $\frac{1}{\alpha} \log_2(1+\alpha) \geq 1$ for all $\alpha \in [0, 1]$.
Except for the initial part, this proof is due to Jin et al.~\cite{Jin2019}.
\end{proof}

We are now ready to prove Lemma \ref{lem:lemma_11}.  This proof is completely due to Jin et al.~\cite{Jin2019}: we only somewhat modify how the proof is presented.

\begin{proof}[Proof of Lemma~\ref{lem:lemma_11}]
Recall that $\eta r < b$~\eqref{eq:etarlessb}, and define the stuck region
$$\mathcal{X}_{\mathrm{stuck}} = \big\{s \in B_{x,\eta r}(0) : f(\textproc{TangentSpaceSteps}(x,s,\eta,b,\mathscr{T})) - f(x) > -\mathscr{F}\big\}.$$
Running the tangent space steps with $s_0$ in that set does not yield sufficient improvement of the cost function despite the fact that the Hessian has a negative eigenvalue with large magnitude, hence the name. We aim to show that this set has a small volume, so that it is unlikely to encounter it by random chance.

Let $S_{e_1}$ be the subspace of $\T_x\calM$ orthogonal to $e_1$.
Given $a \in S_{e_1} \cap B_{x,\eta r}(0)$, let $\ell_a$ denote the line in $\T_x\calM$ parallel to $e_1$ passing through $a$.  Then, with $\mathbbm{1}$ denoting the indicator function,
$$\text{Vol}(\mathcal{X}_{\mathrm{stuck}}) = \int_{\T_x\calM}\mathbbm{1}_{\mathcal{X}_{\mathrm{stuck}}}(y)dy = \int_{S_{e_1} \cap B_{x,\eta r}(0)} \left[ \int_{\ell_a}\mathbbm{1}_{\mathcal{X}_{\mathrm{stuck}}}(z) dz \right] da.$$
Lemma~\ref{lem:lemma_13} states any two points that are both on the line $\ell_a$ \emph{and} in $\mathcal{X}_{\mathrm{stuck}}$ must be close. Specifically, for all $s, s' \in \ell_a\cap\mathcal{X}_{\mathrm{stuck}}$, we have $\norm{s-s'} \leq \eta \omega$, with $\omega = 2^{2-\chi}\ell \mathscr{L}$. Therefore, the set of problematic points on the line $\ell_a$ is contained in a segment of length at most $\eta \omega$ and we deduce $\int_{\ell_a}\mathbbm{1}_{\mathcal{X}_{\mathrm{stuck}}}(z) dz \leq \eta \omega$. As a result,
$$\text{Vol}(\mathcal{X}_{\mathrm{stuck}}) \leq \eta\omega\int_{S_{e_1} \cap B_{x,\eta r}(0)} da = \eta\omega \text{Vol}(\mathbb{B}^{d-1}_{\eta r}),$$
where $\mathbb{B}^k_R$ denotes a $k$-dimensional (Euclidean) ball of radius $R$.  Since $s_0 \sim \text{Uniform}(B_{x,\eta r}(0))$,
\begin{align*}
    \mathbb{P}(s_0 \in \mathcal{X}_{\mathrm{stuck}}) &= \frac{\text{Vol}(\mathcal{X}_{\mathrm{stuck}})}{\text{Vol}(\mathbb{B}^d_{\eta r})} \leq \frac{\eta\omega \text{Vol}(\mathbb{B}^{d-1}_{\eta r})}{\text{Vol}(\mathbb{B}^d_{\eta r})} =\frac{\omega \Gamma(1+d/2)}{r\sqrt{\pi}\Gamma((d+1)/2)} \leq \frac{\omega}{r}\sqrt{\frac{d}{\pi}}\ =\frac{\ell\sqrt{d}}{\sqrt{\rho \epsilon}}\frac{400}{\sqrt{\pi}}2^{-\chi}\chi^2 \\ &\leq \frac{\ell\sqrt{d}}{\sqrt{\rho \epsilon}}2^{10-\chi/2},
\end{align*}
where we used the Gautschi inequality for the $\Gamma$ function, and $\chi > 1/4$ to bound $\frac{400}{\sqrt{\pi}}2^{-\chi}\chi^2 \leq 2^{10-\chi/2}$.
To conclude, note that if $s_0 \not\in\mathcal{X}_{\mathrm{stuck}}$ then
\begin{multline*}
    f(\textproc{TangentSpaceSteps}(x,s_0,\eta,b,\mathscr{T})) - f(x) \\ = f(\textproc{TangentSpaceSteps}(x,s_0,\eta,b,\mathscr{T})) - \hat{f}_x(s_0) + \hat{f}_x(s_0) - \hat{f}_x(0) \\ \leq -\mathscr{F} + \epsilon \eta r + \ell \eta^2r^2/2 = -\mathscr{F} + \frac{\sqrt{\rho\epsilon}}{\ell}\mathscr{F}\big(50\chi^3\big)\bigg(\frac{1}{400\chi^3} + \frac{1}{2}\Big(\frac{1}{400\chi^3}\Big)^2\bigg) \\ \leq -\mathscr{F} + \mathscr{F}\bigg(\frac{1}{8} + \frac{25}{400^2\chi^3}\bigg) \leq -\mathscr{F}/2,
\end{multline*}
using $\sqrt{\rho \epsilon} \leq \ell$ and $\chi > 1/4$ once more in the last step, and also
\begin{align*}
	\hat{f}_x(s_0) - \hat{f}_x(0) \leq \epsilon \eta r + \ell \eta^2r^2/2
\end{align*}
owing to the fact that $\nabla \hat f_x$ is $\ell$-Lipschitz continuous along the line segment connecting 0 and $s_0$, $\|s_0\| \leq \eta r$ and $\|\nabla \hat f_x(0)\| \leq \epsilon$.
\end{proof}

\section{Regularity constants for dominant eigenvector computation (PCA)} \label{sec:PCA}

Computing the dominant eigenvector of a symmetric matrix $A \in \Rnn$ (which notably comes up in PCA) comes down to solving
\begin{align}
	\max_{x \in \Sn} f(x),  & & f(x) = \frac{1}{2} x\transpose A x,
\end{align}
where $\Sn = \{ x \in \Rn : x\transpose x = 1 \}$ is the unit sphere. If we use the retraction $\Retr_x(s) = \frac{x+s}{\|x+s\|}$ (where $\|x\| = \sqrt{x\transpose x}$)---for which Assumption~\ref{as:A4} holds with $\beta = 0$---then pullbacks are of the form
\begin{align}
	\hat f_x(s) & = f(\Retr_x(s)) = \frac{1}{1+\|s\|^2} \frac{1}{2} (x+s)\transpose A (x+s),
\end{align}
defined over the tangent spaces $\T_x\Sn = \{ s \in \Rn : x\transpose s = 0 \}$. The gradient of $\hat f_x$ at $s$ is given by
\begin{align}
	\nabla \hat f_x(s) & = \Proj_x\!\left( \frac{1}{1+\|s\|^2} A(x+s) + \frac{-1}{(1+\|s\|^2)^2} (x+s)\transpose A (x+s) \cdot s \right) \\
					   & = \frac{1}{1+\|s\|^2} \left( \Proj_x(A(x+s)) - 2\hat f_x(s) \cdot s \right),
\end{align}
where $\Proj_x(s) = s - (x\transpose s) x$ is the orthogonal projector from $\Rn$ to $\T_x\Sn$. It follows that
\begin{align}
	\nabla \hat f_x(s) - \nabla \hat f_x(0) & = \frac{1}{1+\|s\|^2} \left( \Proj_x(As) - 2 \hat f_x(s) s - \|s\|^2 \Proj_x(Ax) \right).
\end{align}
Using $\frac{1}{1+\|s\|^2} \leq 1$ and $\frac{\|s\|^2}{1+\|s\|^2} \leq \frac{1}{2}\|s\|$ for all $s$, and using the fact that an orthogonal projector can only reduce the norm of a vector, we find
\begin{align}
	\|\nabla \hat f_x(s) - \nabla \hat f_x(0)\| & \leq \|As\| + 2 \left[\sup_{s\in\T_x\Sn} |\hat f_x(s)| \right] \|s\| + \frac{1}{2} \|Ax\| \|s\|.
\end{align}
Letting $\|A\|$ denote the operator norm of $A$ (largest singular value), we finally obtain
\begin{align}
	\|\nabla \hat f_x(s) - \nabla \hat f_x(0)\| & \leq \frac{5}{2} \|A\| \|s\|.
\end{align}
This shows that Assumption~\ref{as:A2} holds with $b_1 = \infty$ and $L = \frac{5}{2}\|A\|$, or any larger number. For example, the induced 1-norm of the matrix $A$ is straightforward to compute and is an upper-bound on $\|A\|$.

Now aiming to control second-order derivatives, we compute a directional derivative of $\nabla \hat f_x(s)$ and obtain the Hessian of $\hat f_x$ on the tangent space $\T_x\Sn$:
\begin{align*}
	\nabla^2 \hat f_x(s)[\dot s] & = -2 \frac{\inner{s}{\dot s}}{1+\|s\|^2} \nabla \hat f_x(s) + \frac{1}{1+\|s\|^2} \left[ \Proj_x(A\dot s) - 2 \hat f_x(s) \dot s - 2\innersmall{\nabla \hat f_x(s)}{\dot s} s \right],
\end{align*}
where $\inner{u}{v} = u\transpose v$. In particular, $\nabla^2 \hat f_x(0)[\dot s] = \Proj_x(A\dot s) - (x\transpose A x) \dot s$, so that
\begin{multline}
	\inner{\dot s}{\left( \nabla^2 \hat f_x(s) - \nabla^2 \hat f_x(0) \right)[\dot s]} = -4 \frac{\inner{s}{\dot s} \innersmall{\nabla \hat f_x(s)}{\dot s}}{1+\|s\|^2} + \left( \frac{1}{1+\|s\|^2} - 1 \right) \innersmall{\dot s}{A \dot s} \\ - \left( 2 \frac{\hat f_x(s)}{1+\|s\|^2} - x\transpose A x \right) \|\dot s\|^2.
\end{multline}
Using $\frac{\|s\|}{1+\|s\|^2} \leq \frac{1}{2}$, it is easy to see that $\|\nabla \hat f_x(s)\| \leq \frac{3}{2}\|A\|$ and:
\begin{align*}
	\|\nabla^2 \hat f_x(s) - \nabla^2 \hat f_x(0)\| & \leq 4 \|\nabla \hat f_x(s)\| \|s\| + \frac{1}{2} \|A\| \|s\| \\ & \quad + \left[ \sup_{s\in\T_x\Sn} \frac{\left| (x+s)\transpose A (x+s) - (1+\|s\|^2)^2 x\transpose A x \right|}{(1+\|s\|^2)^2 \|s\|} \right] \|s\| \\
		& \leq (6+1/2)\|A\| \|s\| + \left[ \sup_{t > 0} \frac{2t + 3t^2 + t^4}{(1+t^2)^2 t} \right] \|A\| \|s\| \\
		& \leq 9 \|A\| \|s\|.
\end{align*}
This shows Assumption~\ref{as:A3} holds with $b_2 = \infty$ and $\rho = 9\|A\|$.

\end{appendices}

\end{document}